\newtheorem{theorem}{Theorem}[section]
\newtheorem{proposition}[theorem]{Proposition}
\theoremstyle{definition}
\theoremstyle{remark}
\numberwithin{equation}{section}
\newcommand{\Parans}[1]{\left(#1\right)}
\newcommand{\SBrackets}[1]{\left[#1\right]}
\newcommand{\aqprod}[3]{\Parans{#1;#2}_{#3}}
\newcommand{\jacprod}[2]{\SBrackets{#1;#2}_{\infty}}
\author{CHRIS JENNINGS-SHAFFER}
\address{Department of Mathematics, University of Florida\\
Gainesville, Florida 32611, USA
\endgraf cjenningsshaffer@ufl.edu}
\keywords{Number theory, partitions, vector partitions, congruences, ranks, cranks}
\subjclass[2010]{Primary 11P81, 11P83}
\title{Ranks For Two Partition Quadruple Functions}
\begin{document}
%\today

\allowdisplaybreaks

\begin{abstract}
Recently the author introduced two new integer partition quadruple functions, 
which satisfy Ramanujan-type congruences modulo $3$, $5$, $7$, and $13$. Here
we reprove the congruences modulo $3$, $5$, and $7$ by defining a rank-type
statistic that gives a combinatorial refinement of the congruences.
\end{abstract}

\maketitle

\section{Introduction and Statement of Results}
%%%%%%%%%%%%%%%%%%%%%%%%%%%%%%%%%%%%%%%%%%%%%%%%%%%%%%%%%%%%%%%%%%%%%%
%%%%%%%%%%%%%%%%%%%%%%%%%%%%%%%%%%%%%%%%%%%%%%%%%%%%%%%%%%%%%%%%%%%%%%
%%%%%%%%%%%%%%%%%%%%%%%%%%%%%%%%%%%%%%%%%%%%%%%%%%%%%%%%%%%%%%%%%%%%%%
\allowdisplaybreaks

In \cite{JS1} the author introduced the two partition quadruple functions
$u(n)$ and $v(n)$. We recall a partition of a positive integer $n$ is a non-increasing
sequence of positive integers that sum to $n$. For example the partitions of
$5$ are exactly $5$, $4+1$, $3+2$, $3+1+1$, $2+2+1$, $2+1+1+1$, and $1+1+1+1+1$.
We say a quadruple $(\pi_1,\pi_2,\pi_3,\pi_4)$ of partitions
is a partition quadruple of $n$ if altogether the parts of $\pi_1$, $\pi_2$,
$\pi_3$, and $\pi_4$ sum to $n$.
For a partition $\pi$, we let $s(\pi)$ denote the smallest part of $\pi$
and $\ell(\pi)$ denote the largest part of $\pi$. We use the conventions
that the empty partition has smallest part $\infty$ and largest part $0$. We 
let $u(n)$ denote the number of partition quadruples 
$(\pi_1,\pi_2,\pi_3,\pi_4)$ of $n$ such that $\pi_1$ is non-empty,
$s(\pi_1)\le s(\pi_2)$, $s(\pi_1)\le s(\pi_3)$, $s(\pi_1)\le s(\pi_4)$,
and $\ell(\pi_4)\le 2s(\pi_1)$.   
We let $v(n)$ denote the number of partition quadruples 
$(\pi_1,\pi_2,\pi_3,\pi_4)$ counted by $u(n)$, where additionally
the smallest part of $\pi_1$ 
appears at least twice.   
As example, $u(3)=15$ as the relevant partition quadruples are
$(3,-,-,-)$,
$(2+1,-,-,-)$,
$(1+1+1,-,-,-)$,
$(1+1,1,-,-)$,
$(1+1,-,1,-)$,
$(1+1,-,-,1)$,
$(1,2,-,-)$,
$(1,-,2,-)$,
$(1,-,-,2)$,
$(1,1+1,-,-)$,
$(1,-,1+1,-)$,
$(1,-,-,1+1)$,
$(1,1,1,-)$,
$(1,1,-,1)$, and
$(1,-,1,1)$.
Also from this list we see that $v(3)=4$.
In \cite{JS1} we proved the following congruences for $u(n)$ and $v(n)$.
\begin{theorem}\label{TheoremCongruences}
\begin{align*}
	u(3n) \equiv 0 \pmod{3}
	,\\	
	u(5n) \equiv 0 \pmod{5}
	,\\	
	u(5n+3) \equiv 0 \pmod{5}
	,\\	
	u(7n) \equiv 0 \pmod{7}
	,\\	
	u(7n+5) \equiv 0 \pmod{7}
	,\\	
	u(13n) \equiv 0 \pmod{13}
	,\\	
	v(3n+1) \equiv 0 \pmod{3}
	,\\	
	v(5n+1) \equiv 0 \pmod{5}
	,\\	
	v(5n+4) \equiv 0 \pmod{5}
	,\\	
	v(13n+10) \equiv 0 \pmod{13}
.
\end{align*}
\end{theorem}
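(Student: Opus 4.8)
\medskip
\noindent\emph{Proof proposal.}\enspace
My plan is to prove Theorem~\ref{TheoremCongruences} by generating functions. The first step is to record closed forms for them. Classifying a quadruple $(\pi_1,\pi_2,\pi_3,\pi_4)$ counted by $u(n)$ by the value $m=s(\pi_1)$ — so that $\pi_1$ has all parts $\geq m$ with at least one part equal to $m$, both $\pi_2$ and $\pi_3$ have all parts $\geq m$, and $\pi_4$ has all parts in the interval $[m,2m]$ — gives
\[
	\sum_{n\geq1}u(n)q^n \;=\; \sum_{m\geq1}\frac{q^m}{\aqprod{q^m}{q}{\infty}^{3}\,\aqprod{q^m}{q}{m+1}},
	\qquad
	\sum_{n\geq1}v(n)q^n \;=\; \sum_{m\geq1}\frac{q^{2m}}{\aqprod{q^m}{q}{\infty}^{3}\,\aqprod{q^m}{q}{m+1}},
\]
the only difference being whether the forced smallest part $m$ of $\pi_1$ occurs once or (for $v$) at least twice; a quick check against $u(3)=15$ and $v(3)=4$ confirms these.

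The crux is to turn these Lambert-type double sums into modular objects. Using $1/\aqprod{q^m}{q}{\infty}=\aqprod{q}{q}{m-1}/\aqprod{q}{q}{\infty}$ and $\aqprod{q^m}{q}{m+1}=\aqprod{q}{q}{2m}/\aqprod{q}{q}{m-1}$ pulls out a global factor $\aqprod{q}{q}{\infty}^{-3}$, leaving the inner sum $\sum_{m\geq1}q^{m}\aqprod{q}{q}{m-1}^{4}/\aqprod{q}{q}{2m}$ and its $q^{m}\!\to\! q^{2m}$ analogue. I would try to collapse these by a transformation of Bailey-pair, Rogers--Fine, or $q$-Gauss type; failing that, I would compute many coefficients, guess a closed form, and verify it with the Jacobi triple product. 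Either way the target is to express $\sum u(n)q^n$ and $\sum v(n)q^n$ through eta quotients — equivalently, as (weakly holomorphic) modular forms on some $\Gamma_0(N)$ with $N$ divisible by $3\cdot5\cdot7\cdot13$. This identification is where I expect essentially all of the difficulty to sit.

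Granting such a description, the congruences split by modulus. For $p\in\{3,5,7\}$ I would extract the relevant progression by an explicit $p$-dissection: substituting the classical $3$-, $5$-, and $7$-dissections of $\aqprod{q}{q}{\infty}$ and $1/\aqprod{q}{q}{\infty}$ into the closed form and collecting the terms in each residue class should display an overall factor of $p$, just as in the proofs of $p(5n+4)\equiv0\pmod{5}$ and $p(7n+5)\equiv0\pmod{7}$. These congruences should also carry combinatorial content, as the title promises: attach to each quadruple a rank-type statistic — the natural candidate being a signed difference of numbers of parts among the $\pi_i$, so that in the generating function two of the factors $1/\aqprod{q^m}{q}{\infty}$ become $1/\aqprod{zq^m}{q}{\infty}$ and $1/\aqprod{z^{-1}q^m}{q}{\infty}$ — form the resulting two-variable series, and set $z$ to a primitive $p$th root of unity; the congruence follows once one checks that this $q$-series is supported off the progression in question, that is, that the rank residue is equidistributed there. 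For $p=13$, where no refining rank is expected, I would argue $13$-adically: after the reduction above, $\sum u(13n)q^n$ and $\sum v(13n+10)q^n$ are governed by explicit modular forms obtained by sifting the closed form with $13$th roots of unity, and the stated vanishing is confirmed by checking coefficients up to Sturm's bound for the relevant level.

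The main obstacle is thus the second step: finding and proving the eta-quotient (modular) form of the two double sums. Once that is done, the $p=3,5,7$ cases are routine dissections — with combinatorial meaning supplied by the rank — and the $p=13$ cases reduce to a finite coefficient check.
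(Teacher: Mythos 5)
Your first step (the generating functions obtained by summing over $m=s(\pi_1)$) matches the paper's, and your fallback for $p=3,5,7$ --- inserting a root-of-unity variable $z$ that tracks a rank-type statistic and showing the resulting series vanishes on the relevant progressions --- is in spirit the route this paper actually takes. But the proposal has a genuine gap at its load-bearing step. You propose to express $U(q)$ and $V(q)$ as eta quotients, i.e.\ as weakly holomorphic modular forms, and you defer this to a Bailey-pair or $q$-Gauss transformation, or to guess-and-verify. There is no reason to expect such an identity to exist: these are spt-type double sums, and the paper explicitly remarks that it is unclear whether $U(q)$ and $V(q)$ are modular, mock modular, or quasimock modular; the two-variable refinements $RU(\zeta_\ell,q)$ are identified (via Zwegers) as mock modular, not modular. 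Since your dissection argument for $p=3,5,7$ and your Sturm-bound argument for $p=13$ both rest on having a genuine modular form of known level and weight, neither goes through as written. The actual proofs (in the earlier paper for all moduli, and in this paper for $3$, $5$, $7$) instead work with generalized Lambert series throughout: one writes the rank function as a bilateral Lambert-type sum, dissects it using identities for the sums $T(a,b,\ell)$ of Chan type together with a triple-product dissection of $\aqprod{q,\zeta_\ell,\zeta_\ell^{-1}}{q}{\infty}$, and shows the coefficients on the targeted progressions cancel identically.

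The second gap is the rank itself. Your candidate statistic --- turning two of the factors $1/\aqprod{q^m}{q}{\infty}$ into $1/\aqprod{zq^m}{q}{\infty}$ and $1/\aqprod{z^{-1}q^m}{q}{\infty}$ --- is not the one that works. The statistic the paper uses comes from $F(z^2,z^{-2},z;q)$, so the factors acquire weights $z$, $z^{-1}$, $z^{2}$, $z^{-2}$, and the $u$-rank is $\omega(\pi_1,\pi_2,\pi_3,\pi_4)-1+2\#(\pi_2)-2\#(\pi_3)-\#(\pi_4)$, where $\omega$ counts only certain parts of $\pi_1$; even with the correct statistic in hand, the equidistribution must be \emph{proved}, and that proof (the Lambert-series representation of $RU$ and $RV$ plus the explicit identities of Theorem \ref{TheoremMain}) is the entire content of the argument, not a routine check. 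Finally, note that no rank is known to explain the modulo $13$ congruences --- the paper observes that the coefficient of $q^{13}$ in $RU(\zeta_{13},q)$ is nonzero --- so for that case one genuinely needs the $13$-dissections of $U(q)$ and $V(q)$ themselves from the earlier paper; a finite coefficient check cannot substitute for them without first establishing modularity, which is exactly the unproved step.
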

The proof was to use various identities between products and generalized 
Lambert series to determine formulas modulo $\ell$ for the 
$\ell$-dissections of 
generating functions
for $u(n)$ and $v(n)$, with the appropriate terms of the dissections being zero.

We use the standard product notation,
\begin{align*}
	\aqprod{z}{q}{n} 
		&= \prod_{j=0}^{n-1} (1-zq^j)
	,
	&\aqprod{z}{q}{\infty} 
		&= \prod_{j=0}^\infty (1-zq^j)
	,\\
	\aqprod{z_1,\dots,z_k}{q}{n} 
		&= \aqprod{z_1}{q}{n}\dots\aqprod{z_k}{q}{n}
	,
	&\aqprod{z_1,\dots,z_k}{q}{\infty} 
		&= \aqprod{z_1}{q}{\infty}\dots\aqprod{z_k}{q}{\infty},
	\\
	\jacprod{z}{q} 
		&= \aqprod{z,q/z}{q}{\infty}
	,	
	&\jacprod{z_1,\dots,z_k}{q} &= \jacprod{z_1}{q}\dots\jacprod{z_k}{q}
.
\end{align*}
By summing according to $n$ being the smallest part of the
partition $\pi_1$, we find that generating functions for $u(n)$
and $v(n)$ are given by
\begin{align*}
	U(q)
	&=
	\sum_{n=0}^\infty u(n)q^n
	=
	\sum_{n=1}^\infty 
	\frac{q^n}{\aqprod{q^n}{q}{\infty}\aqprod{q^n}{q}
		{\infty}\aqprod{q^n}{q}{\infty}\aqprod{q^n}{q}{n+1}}
	\\
	&=
		q+5q^2+15q^3+44q^4+105q^5+252q^6+539q^7+1135q^8+2259q^9+4390q^{10}+\dots
	,\\
	V(q)
	&=
	\sum_{n=0}^\infty v(n)q^n
	=
	\sum_{n=1}^\infty 
	\frac{q^{2n}}{\aqprod{q^n}{q}{\infty}\aqprod{q^n}{q}
		{\infty}\aqprod{q^n}{q}{\infty}\aqprod{q^n}{q}{n+1}}
	\\
	&=
		q^2+4q^3+15q^4+39q^5+105q^6+237q^7+530q^8+1100q^9+2223q^{10}+\dots
.
\end{align*}

Recently in private correspondence Garvan conjectured that one could use the series
\begin{align*}
	\sum_{n=1}^\infty 
	\frac{q^n\aqprod{q^n,q^n,q^{2n+1}}{q}{\infty}}
	{\aqprod{q^{3n}}{q^3}{\infty}^2}
	,\hspace{20pt}
	\sum_{n=1}^\infty 
		\frac{q^n\aqprod{q^n,q^{2n+1}}{q}{\infty}}
		{\aqprod{q^{5n}}{q^5}{\infty}}
	,\hspace{20pt}
	\sum_{n=1}^\infty 
		\frac{q^n\aqprod{q^n,\zeta_7^3q^n, \zeta_7^4q^n,q^{2n+1}}{q}{\infty}}
		{\aqprod{q^{7n}}{q^7}{\infty}},
\end{align*}
where $\zeta_\ell$ is a primitive $\ell^{th}$ root of unity,
as rank functions to prove the congruences $u(3n)\equiv 0 \pmod{3}$, 
$u(5n) \equiv u(5n+3) \equiv 0 \pmod{5}$, and 
$u(7n) \equiv u(7n+5) \equiv 0 \pmod{7}$ respectively. These functions correspond to the
$z=\zeta_3$, $\zeta_5$, and $\zeta_7$ cases of
$F(z^2,z^{-2},z;q)$, where $F(\rho_1,\rho_2,z;q)$ is a function the author
studied in \cite{JS5} defined by
\begin{align*}
	F(\rho_1,\rho_2,z;q)
	&=
	\frac{\aqprod{q}{q}{\infty}}{\aqprod{z,z^{-1},\rho_1,\rho_2}{q}{\infty}}
	\sum_{n=1}^\infty
	\frac{\aqprod{z,z^{-1},\rho_1,\rho_2}{q}{n}(\tfrac{q}{\rho_2\rho_2})^n}{\aqprod{q}{q}{2n}}	
	.
\end{align*}
In this article we give the proof of this as well as give the corresponding
rank function for $V(q)$.
We let $RU(z,q)=F(z^2,z^{-2},z;q)$ and $RV(z,q)=G(z^2,z^{-2},z;q)$,
where
\begin{align*}
	G(\rho_1,\rho_2,z;q)
	&=
	\frac{\aqprod{q}{q}{\infty}}{\aqprod{z,z^{-1},\rho_1,\rho_2}{q}{\infty}}
	\sum_{n=1}^\infty
	\frac{\aqprod{z,z^{-1},\rho_1,\rho_2}{q}{n}(\tfrac{q^2}{\rho_2\rho_2})^n}{\aqprod{q}{q}{2n}}	
	.
\end{align*}
We prove the following identities for these functions.
\begin{theorem}\label{TheoremMain}
Let $\zeta_\ell$ denote a primitive $\ell^{th}$ root of unity. Then
\begin{align}
	\label{TheoremMainRU3}
	RU(\zeta_3,q)
	&=
		\frac{q^7}{\aqprod{q^3}{q^3}{\infty}}
		\sum_{n=-\infty}^\infty \frac{ (-1)^{n}q^{\frac{9n^2+27n}{2}}}{(1-q^{9n+6}) }	
		-		
		\frac{q^5}{\aqprod{q^3}{q^3}{\infty}}
		\sum_{n=-\infty}^\infty \frac{ (-1)^{n}q^{\frac{9n^2+21n}{2}}}{(1-q^{9n+6}) }	
	%maple agrees	
	,\\
	\label{TheoremMainRV3}
	RV(\zeta_3,q)
	&=
		-\frac{q^3}{\aqprod{q^3}{q^3}{\infty}}
		\sum_{n=-\infty}^\infty \frac{ (-1)^{n}q^{\frac{9n^2+15n}{2}}}{(1-q^{9n+6}) }	
		+		
		\frac{q^5}{\aqprod{q^3}{q^3}{\infty}}
		\sum_{n=-\infty}^\infty \frac{ (-1)^{n}q^{\frac{9n^2+21n}{2}}}{(1-q^{9n+6}) }	
	%maple agreess	
	,\\
	\label{TheoremMainRU5}
	RU(\zeta_5,q)
	&=
		q\frac{\aqprod{q^{25}}{q^{25}}{\infty}}{\jacprod{q^5}{q^{25}}^2}							
		-
		\frac{q^7}{\aqprod{q^{25}}{q^{25}}{\infty}\jacprod{q^{10}}{q^{25}}}
		\sum_{n=-\infty}^\infty \frac{(-1)^nq^{\frac{25n^2+45n}{2}}  }{1-q^{25n+10}}
		\nonumber\\&		
		-
		\frac{q^4}{\aqprod{q^{25}}{q^{25}}{\infty}\jacprod{q^5}{q^{25}}} 
		\sum_{n=-\infty}^\infty \frac{(-1)^nq^{\frac{25n^2+35n}{2}}  }{1-q^{25n+10}}
	%maple agrees
	,\\
	\label{TheoremMainRV5}
	RV(\zeta_5,q)
	&=
		-\frac{q^5}{\aqprod{q^{25}}{q^{25}}{\infty}\jacprod{q^5}{q^{25}}}	
		\sum_{n=-\infty}^\infty \frac{(-1)^nq^{\frac{25n^2+35n}{2}}  }{1-q^{25n+15}}
		+
		\frac{q^{12}}{\aqprod{q^{25}}{q^{25}}{\infty}\jacprod{q^{10}}{q^{25}}}
		\sum_{n=-\infty}^\infty \frac{(-1)^nq^{\frac{25n^2+55n}{2}}  }{1-q^{25n+15}}
		\nonumber\\&\quad		
		+
		q^2\frac{\aqprod{q^{25}}{q^{25}}{\infty}}{\jacprod{q^5,q^{10}}{q^{25}}}		
		-
		q^3\frac{\aqprod{q^{25}}{q^{25}}{\infty}}{\jacprod{q^{10}}{q^{25}}^2}
	%maple agrees	
	,\\
	\label{TheoremMainRU7}	
	RU(\zeta_7,q)
	&=
  		q\frac{\aqprod{q^{49}}{q^{49}}{\infty}\jacprod{q^{21}}{q^{49}}}{\jacprod{q^{7}}{q^{49}} \jacprod{q^{14}}{q^{49}}^2}
		-
		(\zeta_7^2+\zeta_7^5)
		\frac{q^{15}}{\aqprod{q^{49}}{q^{49}}{\infty} \jacprod{q^{21}}{q^{49}}  }
		\sum_{n=-\infty}^\infty\frac{(-1)^n q^{\frac{49n^2+91n}{2}}}{1-q^{49n+21}}		
		\nonumber\\&\quad
		-
		(\zeta_7^3+\zeta_7^4)q^2 
		\frac{\aqprod{q^{49}}{q^{49}}{\infty}}{\jacprod{q^{7},q^{14}}{q^{49}}}
		+
		q^3\frac{\aqprod{q^{49}}{q^{49}}{\infty}}{\jacprod{q^{7},q^{21}}{q^{49}}}		
		+
		(\zeta_7+\zeta_7^6)
		q^4\frac{\aqprod{q^{49}}{q^{49}}{\infty}}{\jacprod{q^{14}}{q^{49}}^2}
		\nonumber\\&\quad		
		+
		(\zeta_7+\zeta_7^6)
		\frac{q^{11}}{\aqprod{q^{49}}{q^{49}}{\infty} \jacprod{q^{14}}{q^{49}} }
		\sum_{n=-\infty}^\infty\frac{(-1)^n q^{\frac{49n^2+77n}{2}}}{1-q^{49n+21}}				
		+
		(1+\zeta_7^3+\zeta_7^4)
		q^6\frac{\aqprod{q^{49}}{q^{49}}{\infty}}{\jacprod{q^{21}}{q^{49}}^2}
		\nonumber\\&\quad
		-
		(1+\zeta_7^3+\zeta_7^4)
		\frac{q^6 }{\aqprod{q^{49}}{q^{49}}{\infty} \jacprod{q^{7}}{q^{49}}}
		\sum_{n=-\infty}^\infty\frac{(-1)^n q^{\frac{49n^2+63n}{2}}}{1-q^{49n+21}}				
	%maple agrees	
	.	
\end{align}
\end{theorem}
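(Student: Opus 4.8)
The strategy is to derive all five identities from a single transformation of $F(\rho_1,\rho_2,z;q)$, together with the analogous one for $G$, specialize the parameters, and then carry out an $\ell$-dissection; the modulus that naturally appears is $\ell^2$, which is why the right-hand sides are built from infinite products and generalized Lambert series in $q^{\ell^2}$ (and, for $\ell=3$, in $q^9$). First I would recall from \cite{JS5} the representation of $F(\rho_1,\rho_2,z;q)$, valid for generic parameters, as an explicit eta-quotient plus a finite linear combination of generalized Lambert series of the shape $\sum_{n}\frac{(-1)^n q^{n(n-1)/2}(\cdots)}{1-xq^{n}}$ (Appell--Lerch sums); if a sufficiently general statement is not available off the shelf, the same representation is obtained by running the defining series through the transformation of \cite{JS5} (a Bailey-pair argument). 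Since $G$ differs from $F$ only in carrying an extra factor $q^n$ in each summand, I would get the corresponding formula for $G$ either from a two-term relation linking $G(\rho_1,\rho_2,z;q)$ to $F(\rho_1,\rho_2,z;q)$ or by rerunning the transformation with the shifted geometric ratio; this is how \eqref{TheoremMainRV3}, \eqref{TheoremMainRV5}, and $RV$ in general are handled in parallel with the $RU$ computation, the only change being the bookkeeping of one extra power of $q$ per term.

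Next I would specialize. Setting $\rho_1=z^2$ and $\rho_2=z^{-2}$ makes $\rho_1\rho_2=1$, so the geometric ratio in the defining sum becomes a pure power of $q$; then I put $z=\zeta_\ell$. The key simplification is the root-of-unity factorization $\prod_{j=0}^{\ell-1}(1-\zeta_\ell^{j}x)=1-x^{\ell}$ and its variants: the denominator product $\aqprod{z,z^{-1},z^2,z^{-2}}{q}{\infty}$ collapses to an eta-quotient in $q^{\ell}$ (with the classical constant absorbing the would-be-vanishing $j=0$ factors correctly), while the Jacobi products $\jacprod{q^{\ell}}{q^{\ell^2}}$, $\jacprod{q^{2\ell}}{q^{\ell^2}}$, $\jacprod{q^{3\ell}}{q^{\ell^2}}$ in the stated formulas come from the pieces that do not fully collapse. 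When $\ell$ is odd, $\rho_1=\zeta_\ell^{2}$ and $\rho_2=\zeta_\ell^{-2}$ are themselves primitive $\ell$-th roots of unity, and it is their interaction with $z=\zeta_\ell$ that produces the extra Jacobi products and the root-of-unity coefficients $\zeta_7^2+\zeta_7^5$, $\zeta_7^3+\zeta_7^4$, $\zeta_7+\zeta_7^6$, $1+\zeta_7^3+\zeta_7^4$ appearing in \eqref{TheoremMainRU7}.

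The heart of the argument is the dissection of the Lambert-series pieces. After specialization each such piece is an Appell--Lerch-type sum whose arguments are monomials in $\zeta_\ell$ and $q$; I would split it into residue classes modulo $\ell^2$ using the standard dissection of $\sum_n(-1)^n q^{n(n-1)/2}x^n$ and of the Appell--Lerch sum into $\ell^2$ subseries, and then collect the surviving classes. The cyclotomic relation $1+\zeta_\ell+\cdots+\zeta_\ell^{\ell-1}=0$ makes most classes cancel or amalgamate, and for $\ell=7$ the seven classes organize into the three orbits under multiplication by a primitive root modulo $7$, which is exactly what yields the three Gaussian-period coefficients. Finally I would match the accumulated eta-quotient terms and Lambert series against the right-hand sides of \eqref{TheoremMainRU3}--\eqref{TheoremMainRU7}, checking that the quadratic exponents $\tfrac{\ell^2 n^2+\cdots}{2}$ and the denominators $1-q^{\ell^2 n+c}$ come out precisely as stated.

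I expect the case $\ell=7$ to be the main obstacle: the generic transformation there produces several Lambert series of different ``conductors'', the specialization introduces three independent cyclotomic coefficients, and confirming that every contribution not invariant under the relevant Galois action cancels requires tracking all seven residue classes simultaneously. The cases $\ell=3$ and $\ell=5$ should be comparatively short once the general transformation is in hand, and in each of these cases the passage from $RU$ to $RV$ amounts to rerunning the same computation with one extra power of $q$ per term. A minor technical point, handled as in \cite{JS5}, is justifying the rearrangement of series when passing from the $q$-hypergeometric form of $F$ and $G$ to their bilateral Lambert-series representations.
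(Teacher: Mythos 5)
Your overall strategy---convert $RU$ and $RV$ to generalized Lambert series via the transformation of \cite{JS5}, specialize $z=\zeta_\ell$, and dissect the summation index into residue classes---is the same as the paper's, and your remarks about the first stage (the passage to a bilateral Lambert-series form, and handling $G$ in parallel with $F$ by tracking the extra power of $q$) are on target. The genuine gap is in the dissection stage. After splitting $j\equiv k\pmod{\ell}$ and rationalizing the denominators $(1-\zeta_\ell^{2}q^{j})(1-\zeta_\ell^{-2}q^{j})$ against $1-q^{\ell^2 n+\ell k}$, one is left with the family of sums $T(k,k+c,\ell)=\sum_{n}(-1)^nq^{\ell^2n(n+1)/2+\ell(k+c)n}/(1-q^{\ell^2n+\ell k})$ for every surviving pair $(k,c)$: twelve distinct series for $\ell=5$ and thirty for $\ell=7$, whereas the stated right-hand sides contain only two, respectively three. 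You attribute this collapse to the cyclotomic relation $1+\zeta_\ell+\cdots+\zeta_\ell^{\ell-1}=0$ together with a Galois-orbit argument, but no algebraic relation among roots of unity can identify $T(2,\cdot,5)$ with $T(3,\cdot,5)$ or $T(4,\cdot,5)$; these are genuinely different $q$-series. What is actually needed, and what the paper supplies, is an analytic identity among generalized Lambert series: the $r=1$, $s=2$ case of Chan's theorem (a Weierstrass-type three-term relation) expressing $T(b_2,a-b_1,\ell)$ in terms of $T(b_1,a-b_2,\ell)$ plus an explicit theta quotient. Iterating this reduces everything to $T(2,1,5)$ and $T(2,2,5)$ (respectively $T(3,1,7)$, $T(3,2,7)$, $T(3,3,7)$) at the cost of accumulating eta/theta quotients, and for $\ell=7$ one further needs the Atkin--Swinnerton-Dyer relation $P(3)^3P(1)-P(2)^3P(3)+q^7P(1)^3P(2)=0$ to bring the accumulated products into the stated form. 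Without these two ingredients the computation does not close, except for $\ell=3$ where only the class $k=2$ survives and the identity falls out directly.

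Two smaller corrections: the dissection is on the index modulo $\ell$, not $\ell^2$ (the resulting series live in $q^{\ell^2}$ only because the quadratic exponent doubles the modulus); and the coefficients $\zeta_7+\zeta_7^6$, $\zeta_7^2+\zeta_7^5$, $\zeta_7^3+\zeta_7^4$, $1+\zeta_7^3+\zeta_7^4$ arise mechanically from the factors $(1-\zeta_7^k)(1-\zeta_7^{k-1})\zeta_7^{1-k}$ and from the rationalization of the denominators, not from organizing residue classes into orbits under a primitive root modulo $7$.
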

The congruences modulo $3$, $5$, and $7$ of Theorem \ref{TheoremCongruences} 
are a corollary to Theorem \ref{TheoremMain} by the standard argument of ranks and
cranks. We let
\begin{align*}
	RU(z,q) &= \sum_{n=1}^\infty\sum_{m=-\infty}^\infty ru(m,n)z^mq^n
	,
	&RV(z,q) &= \sum_{n=1}^\infty\sum_{m=-\infty}^\infty rv(m,n)z^mq^n
	,\\
	ru(k,\ell,n) &= \sum_{m\equiv k\pmod{\ell}} ru(m,n)
	,
	&rv(k,\ell,n) &= \sum_{m\equiv k\pmod{\ell}} rv(m,n)
	.\\
\end{align*}
Since $U(q)=RU(1,q)$ and $V(q)=RV(1,q)$ we have that
\begin{align*}
	u(n) &= \sum_{k=0}^{\ell-1} ru(k,\ell,n)
	,
	&v(n) &= \sum_{k=0}^{\ell-1} rv(k,\ell,n)
	.
\end{align*}
Also we see that
\begin{align*}
	RU(\zeta_\ell,q)
	&=
		\sum_{n=1}^\infty \left(\sum_{k=0}^{\ell-1} ru(k,\ell,n)\zeta_\ell^k\right) q^n
	,
	&RV(\zeta_\ell,q)
	&=
		\sum_{n=1}^\infty \left(\sum_{k=0}^{\ell-1} rv(k,\ell,n)\zeta_\ell^k\right) q^n
	.
\end{align*}

If $\ell$ is prime and the coefficient of $q^N$ in $RU(\zeta_\ell,q)$ is zero, then
\begin{align*}
	ru(0,\ell,N)+ru(1,\ell,N)\zeta_\ell+\dots+ru(\ell-1,\ell,N)\zeta_\ell^{\ell-1} = 0
,
\end{align*}
and so
\begin{align*}
	ru(0,\ell,N)=ru(1,\ell,N)=\dots=ru(\ell-1,\ell,N),
\end{align*}
as the minimal polynomial for $\zeta_\ell$ is $1+x+\dots+x^{\ell-1}$.
Thus $ru(N)=\ell\cdot ru(k,\ell,N)\equiv 0\pmod{\ell}$. By Theorem 
\ref{TheoremMain} the coefficients of
$q^{3n}$ in $RU(\zeta_3,q)$; $q^{5n}$ and $q^{5n+3}$ in $RU(\zeta_5,q)$;
and $q^{7n}$ and $q^{7n+5}$ in $RU(\zeta_7,q)$ are all zero which yields the
congruences for $u(n)$. The explanation for $rv(n)$ is similar.

It is worth noting that the $ru(k,\ell,N)$ being equal 
and the $rv(k,\ell,N)$ being equal is a stronger result than the congruences alone. 
Also the $ru(m,n)$ and $rv(m,n)$ give statistics on the partition
quadruples counted by $u(n)$ and $v(n)$ that yield a combinatorial refinement of the
congruences. For this we recall that $s(\pi)$ is the smallest part of $\pi$ and 
let $\#(\pi)$ denote the number of a parts of $\pi$. 
For a quadruple $(\pi_1,\pi_2,\pi_3,\pi_4)$ we let 
$\omega(\pi_1,\pi_2,\pi_3,\pi_4)$ denote the number of parts of $\pi_1$ that are either
$s(\pi_1)$ or are larger than $s(\pi_1)+\#(\pi_4)$. We note that if
$\#(\pi_4)=0$, then $\omega(\pi)=\#(\pi_1)$.
We define the $u$-rank and $v$-rank of $(\pi_1,\pi_2,\pi_3,\pi_4)$ by
\begin{align*}
	u\mbox{-rank}(\pi_1,\pi_2,\pi_3,\pi_4)
	&=		
	\omega(\pi_1,\pi_2,\pi_3,\pi_4)-1+2\#(\pi_2)-2\#(\pi_3)-\#(\pi_4)
	,\\
	v\mbox{-rank}(\pi_1,\pi_2,\pi_3,\pi_4)
	&=		
	\omega(\pi_1,\pi_2,\pi_3,\pi_4)-2+2\#(\pi_2)-2\#(\pi_3)-\#(\pi_4)
.
\end{align*}
We will prove the following.
\begin{theorem}\label{TheoremCombinatorics}
Let $U$ denote the set of partitions quadruples counted by $u(n)$, 
that is to say,
\begin{align*}
	U = \{(\pi_1,\pi_2,\pi_3,\pi_4): \ell(\pi_4)\le 2s(\pi_1)<\infty \mbox{ and } s(\pi_1)\le s(\pi_i) \mbox{ for } i=2,3,4    \}
.
\end{align*}
In the same fashion let $V$ denote the set of partitions quadruples counted by $v(n)$.
Then $ru(m,n)$ is the number of partition quadruples from $U$ of $n$ with $u$-rank equal to $m$
and $rv(m,n)$ is the number of partition quadruples from $V$ of $n$ with $v$-rank equal to $m$.
Furthermore,
\begin{enumerate}
\item[(i)]
the residue of the $u$-rank mod $3$ divides the partition quadruples from $U$
of $3n$ into $3$ equal classes,
\item[(ii)]
the residue of the $v$-rank mod $3$ divides the partition quadruples from $V$
of $3n+1$ into $3$ equal classes,
\item[(iii)]
the residue of the $u$-rank mod $5$ divides the partition quadruples from $U$
of $5n$ and of $5n+3$ into $5$ equal classes,
\item[(iv)]
the residue of the $v$-rank mod $5$ divides the partition quadruples from $V$
of $5n+1$ and of $5n+4$ into $5$ equal classes, and
\item[(v)]
the residue of the $u$-rank mod $7$ divides the partition quadruples from $U$
of $7n$ and of $7n+5$ into $7$ equal classes.
\end{enumerate}
\end{theorem}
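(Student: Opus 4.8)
In the first stage I would prove that, for each $n$, the polynomial $\sum_m(\text{number of quadruples from }U\text{ of }n\text{ with }u\text{-rank }m)\,z^m$ is the coefficient of $q^n$ in $RU(z,q)$, and likewise that quadruples from $V$ weighted by $v$-rank are generated by $RV(z,q)$; since $RU(z,q)=\sum_{n,m}ru(m,n)z^mq^n$ and $RV(z,q)=\sum_{n,m}rv(m,n)z^mq^n$ by definition, comparing coefficients then identifies $ru(m,n)$ and $rv(m,n)$ with the asserted counts. In the second stage I would derive (i)--(v) from Theorem~\ref{TheoremMain} by the root-of-unity argument recalled just after that theorem.

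For the first stage I would sort the quadruples in $U$ of $n$ by $n=s(\pi_1)$ and $j=\#(\pi_4)$ and assemble the two-variable generating function factor by factor, with $q$ recording the number partitioned and $z$ the $u$-rank. Because every part of $\pi_4$ lies in $\{n,n+1,\dots,2n\}$, subtracting $n$ from each part shows $\pi_4$ contributes $z^{-j}q^{nj}\aqprod{q}{q}{n+j}\aqprod{q}{q}{n}^{-1}\aqprod{q}{q}{j}^{-1}$; the partitions $\pi_2$ and $\pi_3$ contribute $\aqprod{z^2q^n}{q}{\infty}^{-1}$ and $\aqprod{z^{-2}q^n}{q}{\infty}^{-1}$; and for $\pi_1$ I would split its parts into those equal to $n$ (at least one, each marked by $z$ because $\omega$ counts it), those in $\{n+1,\dots,n+j\}$ (unmarked, since $\omega$ ignores them), and those exceeding $n+j$ (marked by $z$), giving $\tfrac{zq^n}{1-zq^n}\aqprod{q^{n+1}}{q}{j}^{-1}\aqprod{zq^{n+j+1}}{q}{\infty}^{-1}$, while the $-1$ in the $u$-rank yields a further global $z^{-1}$. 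Multiplying these, the generating function for quadruples from $U$ weighted by $z^{u\text{-rank}}$ becomes
\[ \sum_{n=1}^\infty\sum_{j=0}^\infty \frac{q^n\,z^{-j}q^{nj}\aqprod{q}{q}{n+j}}{(1-zq^n)\,\aqprod{z^2q^n,z^{-2}q^n}{q}{\infty}\,\aqprod{zq^{n+j+1}}{q}{\infty}\,\aqprod{q}{q}{n}\,\aqprod{q}{q}{j}\,\aqprod{q^{n+1}}{q}{j}} . \]
Using $\aqprod{q}{q}{n+j}=\aqprod{q}{q}{n}\aqprod{q^{n+1}}{q}{j}$ and $\aqprod{zq^{n+j+1}}{q}{\infty}^{-1}=\aqprod{zq^{n+1}}{q}{j}\aqprod{zq^{n+1}}{q}{\infty}^{-1}$, the inner summand collapses to $\frac{z^{-j}q^{nj}\aqprod{zq^{n+1}}{q}{j}}{\aqprod{q}{q}{j}\,\aqprod{zq^{n+1}}{q}{\infty}}$, and the $q$-binomial theorem $\sum_{j\ge0}\frac{\aqprod{a}{q}{j}}{\aqprod{q}{q}{j}}x^j=\frac{\aqprod{ax}{q}{\infty}}{\aqprod{x}{q}{\infty}}$ with $a=zq^{n+1}$, $x=z^{-1}q^n$ sums the series over $j$ to $\frac{\aqprod{q^{2n+1}}{q}{\infty}}{\aqprod{zq^{n+1},z^{-1}q^n}{q}{\infty}}$. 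Merging $\frac{1}{1-zq^n}$ with $\aqprod{zq^{n+1}}{q}{\infty}^{-1}$ and then using $\aqprod{wq^n}{q}{\infty}=\aqprod{w}{q}{\infty}\aqprod{w}{q}{n}^{-1}$ and $\aqprod{q^{2n+1}}{q}{\infty}=\aqprod{q}{q}{\infty}\aqprod{q}{q}{2n}^{-1}$, the expression becomes
\[ \sum_{n=1}^\infty\frac{q^n\aqprod{q^{2n+1}}{q}{\infty}}{\aqprod{zq^n,z^{-1}q^n,z^2q^n,z^{-2}q^n}{q}{\infty}} = \frac{\aqprod{q}{q}{\infty}}{\aqprod{z,z^{-1},z^2,z^{-2}}{q}{\infty}}\sum_{n=1}^\infty\frac{q^n\aqprod{z,z^{-1},z^2,z^{-2}}{q}{n}}{\aqprod{q}{q}{2n}}, \]
which is the $\rho_1=z^2$, $\rho_2=z^{-2}$ specialization of $F(\rho_1,\rho_2,z;q)$, i.e.\ $RU(z,q)$; comparing coefficients of $z^mq^n$ with $RU(z,q)=\sum ru(m,n)z^mq^n$ proves the first assertion for $u$. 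The $v$-rank case is the same except that the smallest part of $\pi_1$ now occurs at least twice, so its parts equal to $n$ contribute $\frac{z^2q^{2n}}{1-zq^n}$ and the $-2$ in the $v$-rank gives a global $z^{-2}$; this replaces $q^n$ by $q^{2n}$ in the final sum and identifies it with $G(z^2,z^{-2},z;q)=RV(z,q)$.

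For the second stage, exactly as noted after Theorem~\ref{TheoremMain}, one checks that the powers of $q$ on the right sides of \eqref{TheoremMainRU3}, \eqref{TheoremMainRV3}, \eqref{TheoremMainRU5}, \eqref{TheoremMainRV5}, \eqref{TheoremMainRU7} avoid the relevant residue classes; for example in \eqref{TheoremMainRU3} the quantities $\tfrac{9n^2+27n}{2}$, $\tfrac{9n^2+21n}{2}$ and $9n+6$ are all divisible by $3$, so every power of $q$ occurring is $\equiv 7\equiv 1$ or $\equiv 5\equiv 2\pmod 3$, and the other four identities are a finite check of the same type. Hence the coefficients of $q^{3n}$ in $RU(\zeta_3,q)$, of $q^{3n+1}$ in $RV(\zeta_3,q)$, of $q^{5n}$ and $q^{5n+3}$ in $RU(\zeta_5,q)$, of $q^{5n+1}$ and $q^{5n+4}$ in $RV(\zeta_5,q)$, and of $q^{7n}$ and $q^{7n+5}$ in $RU(\zeta_7,q)$ all vanish. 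Since the coefficient of $q^N$ in $RU(\zeta_\ell,q)$ equals $\sum_{k=0}^{\ell-1}ru(k,\ell,N)\zeta_\ell^k$ and the minimal polynomial of $\zeta_\ell$ is $1+x+\dots+x^{\ell-1}$, each such vanishing forces $ru(0,\ell,N)=\dots=ru(\ell-1,\ell,N)$, and similarly with $rv$; by the first stage these are precisely the counts of quadruples from $U$ (resp.\ $V$) of $N$ having each residue of $u$-rank (resp.\ $v$-rank), which is the content of (i)--(v).

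The main obstacle is the combinatorial bookkeeping behind the first displayed double sum, specifically that $\omega$ depends on $\#(\pi_4)$, so $\pi_1$ and $\pi_4$ cannot be treated independently; the remedy is to fix $j=\#(\pi_4)$, partition the parts of $\pi_1$ into the three blocks $\{n\}$, $\{n+1,\dots,n+j\}$, $\{n+j+1,n+j+2,\dots\}$ on which $\omega$ behaves uniformly, and only then sum over $j$. Once this is set up correctly, the remaining work — one cancellation of $q$-Pochhammer symbols and a single use of the $q$-binomial theorem — is routine, and the residue checks drawn from Theorem~\ref{TheoremMain} are finite.
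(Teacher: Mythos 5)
Your proposal is correct and follows essentially the same route as the paper: the same $q$-binomial theorem, the same three-block decomposition of the parts of $\pi_1$ relative to $n$ and $n+\#(\pi_4)$, the same interpretation of $q^{nj}\aqprod{q}{q}{n+j}\aqprod{q}{q}{n}^{-1}\aqprod{q}{q}{j}^{-1}$ as generating $\pi_4$, and the same appeal to Theorem \ref{TheoremMain} via the root-of-unity argument for (i)--(v). The only difference is direction --- you build the weighted generating function combinatorially and collapse it to $RU(z,q)$, whereas the paper expands $RU(z,q)$ into the double sum and then reads off the combinatorics --- which is an equivalent presentation of the identical argument.
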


As example of Theorem 1.3, we have $u(3)\equiv 0\pmod{3}$ and 
$u(3)\equiv 0\pmod{5}$ by considering the following table of values:
\begin{align*}
\begin{array}{cccc|c|c|c|c}
	\pi_1 & \pi_2 & \pi_3 & \pi_4 & \omega & u\mbox{-rank} & u\mbox{-rank} \pmod{3} & u\mbox{-rank} \pmod{5}
	\\
	\hline
	3&-&-&-&1&0&0&0	
	\\
	\hline
	2+1&-&-&-&2&1&1&1
	\\
	\hline
	1+1+1&-&-&-&3&2&2&2
	\\
	\hline
	1+1&1&-&-&2&3&0&3
	\\
	\hline
	1+1&-&1&-&2&-1&2&4
	\\
	\hline
	1+1&-&-&1&2&0&0&0
	\\
	\hline
	1&2&-&-&1&2&2&2
	\\
	\hline
	1&-&2&-&1&-2&1&3
	\\
	\hline
	1&-&-&2&1&-1&2&4
	\\
	\hline
	1&1+1&-&-&1&4&1&4
	\\
	\hline
	1&-&1+1&-&1&-4&2&1
	\\
	\hline
	1&-&-&1+1&1&-2&1&3
	\\
	\hline
	1&1&1&-&1&0&0&0
	\\
	\hline
	1&1&-&1&1&1&1&1
	\\
	\hline
	1&-&1&1&1&-3&0&2	
\end{array}
\end{align*}

Two remarks are in order. The first is that Theorem 1.3 shows that we are doing
something more than reproving some of the congruences for $u(n)$ and $v(n)$.
Previously we simply knew the congruences held, whereas now we have a much 
stronger refinement. This refinement fits into the rich framework of
partition rank and cranks. To review the history of this subject, one should
consult the works in \cite{AndrewsBerndt3, AndrewsGarvan, AS, Dyson, Garvan1}.
The second is that the proof of Theorem 1.2 is considerably easier than the 
original proofs of the congruences. One possible explanation for this is that
it is not entirely clear what kind of functions are $U(q)$ and $V(q)$;
are they modular, mock modular, or quasi-mock modular? However given
a form of $RU(z,q)$ and $RV(z,q)$ in terms of generalized Lambert series,
it clear by the works of Zwegers \cite{Zwegers} that for $z$ a root of unity,
other than $\pm 1$, they are mock modular forms.

In Section 2 we give a few preliminary identities necessary for the proof of
Theorem \ref{TheoremMain}, in Section 3 we prove Theorem \ref{TheoremMain},
in Section 4 we prove Theorem \ref{TheoremCombinatorics}, and in Section 5 we 
give a few concluding remarks.

\section{Preliminaries}

We begin with expressing the two rank functions as generalized Lambert series.
\begin{proposition}
We have
\begin{align}
	\label{EqRUFinalForm}
	RU(z,q)
	&=
	\frac{1}{(1+z)\aqprod{q,z,z^{-1}}{q}{\infty}}
		\sum_{j=-\infty}^\infty
		\frac{ (1-z^j)(1-z^{j-1})z^{1-j} (-1)^{j}q^{\frac{j(j+3)}{2}}	}
			{(1-z^2q^{j}) (1-z^{-2}q^{j}) }
	,\\
	\label{EqRVFinalForm}
	RV(z,q)
	&=
	\frac{1}{(1+z)\aqprod{q,z,z^{-1}}{q}{\infty}}
		\sum_{j=-\infty}^\infty
		\frac{ (1-z^j)(1-z^{j-1})z^{1-j} (-1)^{j}q^{\frac{j(j+1)}{2}}	}
			{(1-z^2q^{j}) (1-z^{-2}q^{j}) }	
.
\end{align}
\end{proposition}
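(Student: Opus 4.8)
The plan is to start from the definitions $RU(z,q)=F(z^2,z^{-2},z;q)$ and $RV(z,q)=G(z^2,z^{-2},z;q)$, where $F$ and $G$ are given as basic hypergeometric-type series over $\aqprod{q}{q}{2n}$. Substituting $\rho_1=z^2$, $\rho_2=z^{-2}$ into the defining series, the factor $(\tfrac{q}{\rho_1\rho_2})^n$ in $F$ becomes $q^n$ and the factor $(\tfrac{q^2}{\rho_1\rho_2})^n$ in $G$ becomes $q^{2n}$ (reading the typo $\rho_2\rho_2$ as $\rho_1\rho_2$). So I would first rewrite
\begin{align*}
	RU(z,q)
	&=
	\frac{\aqprod{q}{q}{\infty}}{\aqprod{z,z^{-1},z^2,z^{-2}}{q}{\infty}}
	\sum_{n=1}^\infty
	\frac{\aqprod{z,z^{-1},z^2,z^{-2}}{q}{n}\,q^n}{\aqprod{q}{q}{2n}}
	,
\end{align*}
and similarly for $RV$ with $q^n$ replaced by $q^{2n}$.

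Next I would apply a Watson–Whipple/Bailey-type transformation to turn this very-well-poised-looking ${}_6\phi_5$ or ${}_8\phi_7$ piece into a bilateral Lambert series. Concretely, the author almost surely has, from \cite{JS5}, a known identity expressing $F(\rho_1,\rho_2,z;q)$ (and $G$) as a sum over $j\in\mathbb{Z}$ of terms of the shape $\frac{(-1)^j q^{\binom{j}{2}+\cdots}}{(1-\rho_1 q^j)(1-\rho_2 q^j)}$ times a rational factor in the parameters; this is the standard device (used for Dyson's rank, the Andrews–Garvan crank, and their $k$-fold generalizations) of converting a Hecke-type double sum into a Lambert series. Specializing $\rho_1=z^2$, $\rho_2=z^{-2}$ in that identity yields the two claimed forms directly. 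The exponents $\tfrac{j(j+3)}{2}$ in \eqref{EqRUFinalForm} versus $\tfrac{j(j+1)}{2}$ in \eqref{EqRVFinalForm} are exactly accounted for by the extra $q^n$ (shift $q^j\to q^{j+1}$ inside the sum on the $G$-side). The rational coefficient $(1-z^j)(1-z^{j-1})z^{1-j}$ over $(1+z)$ arises from simplifying $\frac{\aqprod{z,z^{-1},z^2,z^{-2}}{q}{n}}{\aqprod{z^2,z^{-2}}{q}{\infty}}$-type ratios at the residue values $q^j$; a short partial-fractions computation on the quotient $\frac{1}{(1-z^2 q^j)(1-z^{-2}q^j)}$ and collecting the $z$-dependence should reproduce it.

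The step I expect to be the main obstacle is pinning down exactly which transformation from \cite{JS5} applies and verifying that the specialization $\rho_1\rho_2 = 1$ (which is what $\rho_2 = z^{-2}$, $\rho_1=z^2$ forces) is legitimate — several of the classical very-well-poised transformations have a factor like $\frac{q}{\rho_1\rho_2}$ in the base of a resulting infinite product, and when $\rho_1\rho_2=1$ one must check that no spurious pole or vanishing denominator appears, and that the bilateral sum still converges as a formal power series in $q$. Absent a black-box lemma, the fallback is to prove the two identities by a direct residue/partial-fraction argument: expand $\frac{1}{\aqprod{q}{q}{2n}}$ via the $q$-binomial theorem, interchange the order of summation, and recognize the inner sum as a theta-quotient times a single Lambert series; this is more computational but entirely elementary. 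I would also sanity-check both formulas against the low-order expansions of $U(q)$ and $V(q)$ given in the introduction by setting $z=1$ (where both sides are easy to evaluate) to catch sign or shift errors.
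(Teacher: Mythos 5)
Your overall strategy is the paper's: both start from $RU(z,q)=F(z^2,z^{-2},z;q)$, $RV(z,q)=G(z^2,z^{-2},z;q)$ and invoke the identities for $F$ and $G$ established in \cite{JS5}. The gap is in your assertion that ``specializing $\rho_1=z^2$, $\rho_2=z^{-2}$ in that identity yields the two claimed forms directly.'' The identity actually available, \cite[Theorem 2.3]{JS5}, is not a bilateral Lambert series with denominator $(1-\rho_1q^j)(1-\rho_2q^j)$: it expresses $(1+z)\aqprod{z,z^{-1}}{q}{\infty}F(\rho_1,\rho_2,z;q)$ as a \emph{unilateral} sum over $j\ge 1$ whose summand carries the products $\aqprod{\rho_1,\rho_2}{q}{j-1}\aqprod{\rho_1^{-1}q^{j+1},\rho_2^{-1}q^{j+1}}{q}{\infty}$ together with the six-term factor $1-\rho_1^{-1}q^{j}-\rho_2^{-1}q^{j}+\rho_1^{-1}q^{3j-1}+\rho_2^{-1}q^{3j-1}-q^{4j-2}$. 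Under $\rho_1\rho_2=1$ these products do collapse to $\aqprod{q}{q}{\infty}^{-1}$ times the four-factor denominator $(1-z^2q^{j-1})(1-z^{-2}q^{j-1})(1-z^{2}q^{j})(1-z^{-2}q^{j})$ --- and your worry about this specialization is unfounded, since $\aqprod{q/(\rho_1\rho_2)}{q}{\infty}$ simply becomes $\aqprod{q}{q}{\infty}$ --- but two further steps remain, and they are the entire content of the proof: (i) the partial-fraction identity
\[
	\frac{1-z^2q^{j}-z^{-2}q^{j}+z^{2}q^{3j-1}+z^{-2}q^{3j-1}-q^{4j-2}}{(1-z^2q^{j-1})(1-z^{-2}q^{j-1})(1-z^2q^{j})(1-z^{-2}q^{j})}
	=
	\frac{1}{(1-z^2q^{j-1})(1-z^{-2}q^{j-1})}-\frac{q^{2j}}{(1-z^2q^{j})(1-z^{-2}q^{j})},
\]
with the analogous decomposition of $(1-q)(1-q^{2j-1})$ over the same denominator on the $G$ side, which splits each series into a difference of two unilateral sums with two-factor denominators; and (ii) the reindexing $j\mapsto 1-j$ in the first of these, which maps it onto the $j\le 0$ portion of the claimed bilateral sum (the $j=0$ term vanishing because of the factor $1-z^{j}$). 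Neither step is difficult, but neither is supplied, and without them the Proposition does not follow from what you have written; your partial-fractions remark about $1/((1-z^2q^j)(1-z^{-2}q^j))$ is aimed at the wrong quotient.

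Two smaller points. Your fallback argument ($q$-binomial expansion of $1/\aqprod{q}{q}{2n}$ and interchange of summation) is not developed enough to evaluate and is not what is needed here. And the proposed sanity check at $z=1$ is not as easy as you suggest: the prefactor $1/\aqprod{z^{-1}}{q}{\infty}$ and the factors $(1-z^{j})(1-z^{j-1})$ all degenerate there, so one must compute a limit rather than substitute.
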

\begin{proof}
By \cite[Theorem 2.3]{JS5} we have that
\begin{align*}
	&(1+z)\aqprod{z,z^{-1}}{q}{\infty}F(\rho_1,\rho_2,z;q)
	\\	
	&=
	\tfrac{1}{\aqprod{\rho_1,\rho_2,\tfrac{q}{\rho_1\rho_2}}{q}{\infty}}	
	\sum_{j=1}^\infty
		(1-z^j)(1-z^{j-1})z^{1-j}
		\rho_1^{1-j}\rho_2^{1-j}		
		(-1)^{j+1}q^{\frac{j(j-1)}{2}}
		\aqprod{\rho_1,\rho_2}{q}{j-1}
		\aqprod{\rho_1^{-1}q^{j+1},\rho_2^{-1}q^{j+1} }{q}{\infty}	
		\\&\quad\times		
		( 1 - \rho_1^{-1}q^j - \rho_2^{-1}q^j + \rho_1^{-1}q^{3j-1} + \rho_2^{-1}q^{3j-1} - q^{4j-2} )	
	,\\
	&(1+z)\aqprod{z,z^{-1}}{q}{\infty}G(\rho_1,\rho_2,z;q)
	\\	
	&=
	\tfrac{1}{\aqprod{\rho_1,\rho_2,\tfrac{q^2}{\rho_1\rho_2}}{q}{\infty}}	
	\sum_{j=1}^\infty
		(1-z^j)(1-z^{j-1})z^{1-j}
		\rho_1^{1-j}\rho_2^{1-j}		
		(-1)^{j+1}q^{\frac{j(j+1)}{2}-1}
		(1-q^{2j-1})		
		\aqprod{\rho_1,\rho_2}{q}{j-1}
		\\&\qquad\times		
		\aqprod{\rho_1^{-1}q^{j+1},\rho_2^{-1}q^{j+1} }{q}{\infty}	
.
\end{align*}
With $\rho_2=\rho_1^{-1}$ and $\rho_1=z^2$ we find the above reduces to
\begin{align*}
	&(1+z)\aqprod{z,z^{-1}}{q}{\infty}RU(z,q)
	\\	
	&=
	\frac{1}{\aqprod{q}{q}{\infty}}	
	\sum_{j=1}^\infty
	\frac{
		(1-z^j)(1-z^{j-1})z^{1-j}		
		(-1)^{j+1}q^{\frac{j(j-1)}{2}}	
		( 1 - z^2q^j - z^{-2}q^j + z^2q^{3j-1} + z^{-2}q^{3j-1} - q^{4j-2} )
	}{(1-z^2q^{j-1}) (1-z^{-2}q^{j-1}) (1-z^2q^{j}) (1-z^{-2}q^{j}) }
	,\\
	&(1+z)\aqprod{z,z^{-1}}{q}{\infty}RV(z,q)
	\\	
	&=
	\frac{1}{\aqprod{q}{q}{\infty}}	
	\sum_{j=1}^\infty
	\frac{
		(1-z^j)(1-z^{j-1})z^{1-j}		
		(-1)^{j+1}q^{\frac{j(j+1)}{2}-1}	
		(1-q)(1-q^{2j-1})
	}{(1-z^2q^{j-1}) (1-z^{-2}q^{j-1}) (1-z^2q^{j}) (1-z^{-2}q^{j}) }
.
\end{align*}
We note that
\begin{align*}
	\frac{1 - z^2q^j - z^{-2}q^j + z^2q^{3j-1} + z^{-2}q^{3j-1} - q^{4j-2} 	}
		{(1-z^2q^{j-1}) (1-z^{-2}q^{j-1}) (1-z^2q^{j}) (1-z^{-2}q^{j}) }
	&=
	\frac{1}{(1-z^2q^{j-1})(1-z^{-2}q^{j-1})}
	-
	\frac{q^{2j}}{(1-z^2q^{j})(1-z^{-2}q^{j})}
	,\\
	\frac{(1-q)(1-q^{2j-1})}
		{(1-z^2q^{j-1}) (1-z^{-2}q^{j-1}) (1-z^2q^{j}) (1-z^{-2}q^{j}) }
	&=
	\frac{1}{(1-z^2q^{j-1})(1-z^{-2}q^{j-1})}
	-
	\frac{q}{(1-z^2q^{j})(1-z^{-2}q^{j})}
,
\end{align*}
so that
\begin{align*}
	(1+z)\aqprod{z,z^{-1}}{q}{\infty}RU(z,q)
	&=
		\frac{1}{\aqprod{q}{q}{\infty}}	
		\sum_{j=1}^\infty
		\frac{ (1-z^j)(1-z^{j-1})z^{1-j} (-1)^{j+1}q^{\frac{j(j-1)}{2}}	}
			{(1-z^2q^{j-1}) (1-z^{-2}q^{j-1}) }
		\\&\quad		
		-
		\frac{1}{\aqprod{q}{q}{\infty}}	
		\sum_{j=1}^\infty
		\frac{ (1-z^j)(1-z^{j-1})z^{1-j} (-1)^{j+1}q^{\frac{j(j+3)}{2}}	}
			{(1-z^2q^{j}) (1-z^{-2}q^{j}) }
	\\
	&=
		\frac{1}{\aqprod{q}{q}{\infty}}	
		\sum_{j=-\infty}^\infty
		\frac{ (1-z^j)(1-z^{j-1})z^{1-j} (-1)^{j}q^{\frac{j(j+3)}{2}}	}
			{(1-z^2q^{j}) (1-z^{-2}q^{j}) }
	,\\
	(1+z)\aqprod{z,z^{-1}}{q}{\infty}RV(z,q)
	&=
		\frac{1}{\aqprod{q}{q}{\infty}}	
		\sum_{j=1}^\infty
		\frac{ (1-z^j)(1-z^{j-1})z^{1-j} (-1)^{j+1}q^{\frac{j(j+1)}{2}-1}	}
			{(1-z^2q^{j-1}) (1-z^{-2}q^{j-1}) }
		\\&\quad		
		-
		\frac{1}{\aqprod{q}{q}{\infty}}	
		\sum_{j=1}^\infty
		\frac{ (1-z^j)(1-z^{j-1})z^{1-j} (-1)^{j+1}q^{\frac{j(j+1)}{2}}	}
			{(1-z^2q^{j}) (1-z^{-2}q^{j}) }
	\\
	&=
		\frac{1}{\aqprod{q}{q}{\infty}}	
		\sum_{j=-\infty}^\infty
		\frac{ (1-z^j)(1-z^{j-1})z^{1-j} (-1)^{j}q^{\frac{j(j+1)}{2}}	}
			{(1-z^2q^{j}) (1-z^{-2}q^{j}) }
.
\end{align*}
\end{proof}

The proof of Theorem \ref{TheoremMain} will be to replace $n$ by $\ell n+k$,
with $k=0,1,\dots,\ell-1$ in the series in (\ref{EqRUFinalForm}) and 
(\ref{EqRUFinalForm}) and find cancellations between the resulting terms.
For this, we let
\begin{align*}
	T(a,b,\ell) 
	&= 
	\sum_{n=-\infty}^\infty 
		\frac{(-1)^n q^{\frac{\ell^2n(n+1)}{2}+\ell bn} }{(1-q^{\ell^2n+\ell a})}
.
\end{align*}
By letting $n\mapsto -n$ we have the useful fact that
$T(-a,b,\ell)=-q^{\ell a}T(a,-b,\ell)$.
Here and in the proof of Theorem \ref{TheoremMain} we use the notation
\begin{align*}
	E(a) &= \aqprod{q^a}{q^a}{\infty},	
	&P(a) &= \jacprod{q^{\ell a}}{q^{\ell^2}}
,
\end{align*}
where $\ell$ will always be clear from the context.
We note that $P(\ell-a)=P(a)$ and $P(\ell+a)=P(-a)=-q^{-\ell a}P(a)$.
The $r=1$ and $s=2$ case of \cite[Theorem 2.1]{Chan} 
with $q\mapsto q^{\ell^2}$, $a_1\mapsto q^{\ell a}$, $b_1\mapsto q^{\ell b_1}$, and $b_1\mapsto q^{\ell b_2}$  gives
\begin{align*}
	\frac{P(a)E(\ell^2)^2}{P(b_1)P(b_2)}
	&=
	\frac{P(a-b_1)}{P(b_2-b_1)}T(b_1, a-b_2,\ell)	
	+
	\frac{P(a-b_2)}{P(b_1-b_2)}T(b_2, a-b_1,\ell)	
	,
\end{align*}
so that
\begin{align}
	\label{EqChan1}
	T(b_2, a-b_1,\ell)	
	&=
	q^{\ell(b_1-b_2)}
	\frac{P(a-b_1)}{P(a-b_2)}T(b_1, a-b_2,\ell)	
	-
	q^{\ell(b_1-b_2)}
	\frac{P(a)P(b_2-b_1)E(\ell^2)^2} {P(b_1)P(b_2)P(a-b_2)}
	.
\end{align}
By setting $b_1=-b$, $b_2=b$, and using that $T(-b,a-b,\ell)=-q^{\ell b}T(b,b-a,\ell)$, we also deduce that
\begin{align}
	\label{EqChan2}
	T(b, a+b,\ell)	
	&=
	-q^{-\ell b }
	\frac{P(a+b)}{P(a-b)}T(b, b-a,\ell)	
	+
	q^{-\ell b}
	\frac{P(a)P(2b)E(\ell^2)^2} {P(b)^2 P(a-b)}
	.
\end{align}

Additionally we need an identity for $\aqprod{q,\zeta_\ell,\zeta^{-1}_\ell}{q}{\infty}$.
Using the Jacobi triple product identity,
\begin{align*}
	\aqprod{zq,z^{-1},q}{q}{\infty}
	&=
	\sum_{n=-\infty}^\infty
	(-1)^n z^n q^{\frac{n(n+1)}{2}}
,
\end{align*}
we easily deduce that for odd $\ell$ we have
\begin{align}
	\label{EqProdDissection}
	\aqprod{q,\zeta_\ell,\zeta^{-1}_\ell}{q}{\infty}
	&=	
	(1-\zeta_\ell)E(\ell^2)
	\sum_{k=0}^{\frac{\ell-3}{2}}
	(-1)^k(\zeta_\ell^k-\zeta_\ell^{-k-1})q^{\frac{k(k+1)}{2}}P(\tfrac{\ell-1}{2}-k)
	.
\end{align}

\section{Proof of Theorem \ref{TheoremMain}}

\begin{proof}[Proof of (\ref{TheoremMainRU3})]
By (\ref{EqRUFinalForm}) we have
\begin{align*}
	&RU(\zeta_3,q)
	\\
	&=
		\frac{1}{(1+\zeta_3)\aqprod{q,\zeta_3,\zeta_3^{-1}}{q}{\infty}}
		\sum_{j=-\infty}^\infty
		\frac{ (1-\zeta_3^j)(1-\zeta_3^{j-1})\zeta_3^{1-j} (-1)^{j}q^{\frac{j(j+3)}{2}}	}
			{(1-\zeta_3^2q^{j}) (1-\zeta_3^{-2}q^{j}) }
	\\
	&=
		\frac{1}{(1+\zeta_3)(1-\zeta_3)(1-\zeta_3^{-1})\aqprod{q^3}{q^3}{\infty}}
		\sum_{j=-\infty}^\infty
		\frac{ (1-\zeta_3^2)(1-\zeta_3)\zeta_3^{-1} (-1)^{j}q^{\frac{9j^2+21j}{2}+5}	}
			{(1-\zeta_3^2q^{3j+2}) (1-\zeta_3^{-2}q^{3j+2}) }
	\\	
	&=
		\frac{-1}{\aqprod{q^3}{q^3}{\infty}}
		\sum_{j=-\infty}^\infty
		\frac{ (-1)^{j}q^{\frac{9j^2+21j)}{2}+5} (1-q^{3j+2})	}
			{(1-q^{9j+6}) }		
	\\
	&=
		\frac{q^7}{\aqprod{q^3}{q^3}{\infty}}
		\sum_{n=-\infty}^\infty \frac{ (-1)^{j}q^{\frac{9j^2+27j}{2}}}{(1-q^{9j+6}) }	
		-		
		\frac{q^5}{\aqprod{q^3}{q^3}{\infty}}
		\sum_{n=-\infty}^\infty \frac{ (-1)^{j}q^{\frac{9j^2+21j}{2}}}{(1-q^{9j+6}) }	
,
\end{align*}
which is (\ref{TheoremMainRU3}).
\end{proof}
%%%%%%%%%%%%%%%%%%%%%%%%%%%%%%%%%%%%%%%%%%%%%%%%%%%%%%%%%%%%%%%%%%%%%%%%%%%%%%%
%%%%%%%%%%%%%%%%%%%%%%%%%%%%%%%%%%%%%%%%%%%%%%%%%%%%%%%%%%%%%%%%%%%%%%%%%%%%%%%
%%%%%%%%%%%%%%%%%%%%%%%%%%%%%%%%%%%%%%%%%%%%%%%%%%%%%%%%%%%%%%%%%%%%%%%%%%%%%%%
%%%%%%%%%%%%%%%%%%%%%%%%%%%%%%%%%%%%%%%%%%%%%%%%%%%%%%%%%%%%%%%%%%%%%%%%%%%%%%%
%%%%%%%%%%%%%%%%%%%%%%%%%%%%%%%%%%%%%%%%%%%%%%%%%%%%%%%%%%%%%%%%%%%%%%%%%%%%%%%
\begin{proof}[Proof of (\ref{TheoremMainRV3})]
By (\ref{EqRVFinalForm}) we have
\begin{align*}
	&RV(\zeta_3,q)
	\\
	&=
		\frac{1}{(1+\zeta_3)\aqprod{q,\zeta_3,\zeta_3^{-1}}{q}{\infty}}
		\sum_{j=-\infty}^\infty
		\frac{ (1-\zeta_3^j)(1-\zeta_3^{j-1})\zeta_3^{1-j} (-1)^{j}q^{\frac{j(j+1)}{2}}	}
			{(1-\zeta_3^2q^{j}) (1-\zeta_3^{-2}q^{j}) }
	\\
	&=
		\frac{1}{(1+\zeta_3)(1-\zeta_3)(1-\zeta_3^{-1})\aqprod{q^3}{q^3}{\infty}}
		\sum_{j=-\infty}^\infty
		\frac{ (1-\zeta_3^2)(1-\zeta_3)\zeta_3^{-1} (-1)^{j}q^{\frac{9j^2+15j}{2}+3}	}
			{(1-\zeta_3^2q^{3j+2}) (1-\zeta_3^{-2}q^{3j+2}) }
	\\	
	&=
		\frac{-1}{\aqprod{q^3}{q^3}{\infty}}
		\sum_{j=-\infty}^\infty
		\frac{ (-1)^{j}q^{\frac{9j^2+15j}{2}+3} (1-q^{3j+2})	}
			{(1-q^{9j+6}) }		
	\\
	&=
		-\frac{q^3}{\aqprod{q^3}{q^3}{\infty}}
		\sum_{n=-\infty}^\infty \frac{ (-1)^{j}q^{\frac{9j^2+15j}{2}}}{(1-q^{9j+6}) }	
		+		
		\frac{q^5}{\aqprod{q^3}{q^3}{\infty}}
		\sum_{n=-\infty}^\infty \frac{ (-1)^{j}q^{\frac{9j^2+21j}{2}}}{(1-q^{9j+6}) }	
,
\end{align*}
which is (\ref{TheoremMainRV3}).
\end{proof}
%%%%%%%%%%%%%%%%%%%%%%%%%%%%%%%%%%%%%%%%%%%%%%%%%%%%%%%%%%%%%%%%%%%%%%%%%%%%%%%
%%%%%%%%%%%%%%%%%%%%%%%%%%%%%%%%%%%%%%%%%%%%%%%%%%%%%%%%%%%%%%%%%%%%%%%%%%%%%%%
%%%%%%%%%%%%%%%%%%%%%%%%%%%%%%%%%%%%%%%%%%%%%%%%%%%%%%%%%%%%%%%%%%%%%%%%%%%%%%%
%%%%%%%%%%%%%%%%%%%%%%%%%%%%%%%%%%%%%%%%%%%%%%%%%%%%%%%%%%%%%%%%%%%%%%%%%%%%%%%
%%%%%%%%%%%%%%%%%%%%%%%%%%%%%%%%%%%%%%%%%%%%%%%%%%%%%%%%%%%%%%%%%%%%%%%%%%%%%%%
\begin{proof}[Proof of (\ref{TheoremMainRU5})]
By (\ref{EqRUFinalForm}) we have
\begin{align*}
	&RU(\zeta_5,q)
	\\
	&=
		\frac{1}{(1+\zeta_5)\aqprod{q,\zeta_5,\zeta_5^{-1}}{q}{\infty}}
		\sum_{j=-\infty}^\infty
		\frac{ (1-\zeta_5^j)(1-\zeta_5^{j-1})\zeta_5^{1-j} (-1)^{j}q^{\frac{j(j+3)}{2}}	}
			{(1-\zeta_5^2q^{j}) (1-\zeta_5^{-2}q^{j}) }
	\\
	&=	
		\frac{1}{(1+\zeta_5)\aqprod{q,\zeta_5,\zeta_5^{-1}}{q}{\infty}}
		\sum_{k=2}^4		
		(-1)^k(1-\zeta_5^k)(1-\zeta_5^{k-1})\zeta_5^{1-k} q^{\frac{k(k+3)}{2}}	
		\sum_{j=-\infty}^\infty
		\frac{ (-1)^{j}q^{\frac{25j^2+15j}{2} +5jk}	}
			{(1-\zeta_5^2q^{5j+k}) (1-\zeta_5^{-2}q^{5j+k}) }
	\\
	&=	
		\frac{1}{(1+\zeta_5)\aqprod{q,\zeta_5,\zeta_5^{-1}}{q}{\infty}}
		\sum_{k=2}^4		
		(-1)^k(1-\zeta_5^k)(1-\zeta_5^{k-1})\zeta_5^{1-k} q^{\frac{k(k+3)}{2}}	
		\\&\quad\times
		\sum_{j=-\infty}^\infty		
		\frac{ (-1)^j q^{ \frac{25j(j+1)}{2}-5j+5jk} (1-q^{5j+k})(1-\zeta_5 q^{5j+k})(1-\zeta_5^{-1} q^{5j+k}) }
			{ (1 - q^{25j+5k}) }		
	\\
	&=	
		\frac{1}{(1+\zeta_5)\aqprod{q,\zeta_5,\zeta_5^{-1}}{q}{\infty}}
		\sum_{k=2}^4		
		(-1)^k(1-\zeta_5^k)(1-\zeta_5^{k-1})\zeta_5^{1-k} q^{\frac{k(k+3)}{2}}	
		\\&\quad\times	
		\left(
			T(k,k-1,5) 
			- (1+\zeta_5+\zeta_5^4)q^k T(k,k,5) 
			+ (1+\zeta_5+\zeta_5^4)q^{2k} T(k,k+1,5)	
			- q^{3k} T(k,k+2,5)	
		\right)
.
\end{align*}

In (\ref{EqChan1}) we set $\ell=5$, $a=2+k+c$, $b_1=2$, and $b_2=k$ to get
\begin{align*}
	T(k,k+c,5)
	&=
	q^{10-5k}\frac{P(k+c)}{P(2+c)}T(2,2+c,5)	
	-
	q^{10-5k}\frac{P(2+k+c)P(k-2)E(25)^2}{P(2)P(k)P(2+c)}
	,
\end{align*}
for $k=3,4$ and $c=-1,0,1,2$.
We set $\ell=5$, $a=1$, and $b=2$ in (\ref{EqChan2}) and simplify the products to get
\begin{align*}
	T(2,3,5)
	&=
	q^{-5}\frac{P(2)}{P(1)}T(2,1,5)	
	-
	q^{-5}\frac{P(1)E(25)^2}{P(2)^2}
	%maple agrees
.
\end{align*}
With these identities we write each of the $T(a,b,5)$ in terms of $T(2,1,5)$
and $T(2,2,5)$ and carefully simplify to find that
\begin{align}
	\label{EqProofRU5Last}
	&
	(1+\zeta_5)\aqprod{q,\zeta_5,\zeta_5^{-1}}{q}{\infty}RU(\zeta_5,q)
	\nonumber\\
	&=
		T(2, 1, 5)\left( - (2+2\zeta_5 + \zeta_5^3)q^4\tfrac{P(2)}{P(1)} + (1+\zeta_5-2\zeta_5^3)q^5  \right)	
		+					
		T(2, 2, 5)\left(-(2+2\zeta_5+\zeta_5^3 )q^7 + (1 + \zeta_5 - 2\zeta_5^3)q^8\tfrac{P(1)}{P(2)} \right)	
		\nonumber\\&\quad
		+
		(2+2\zeta_5+\zeta_5^3) q\tfrac{P(2)E(25)^2}{P(1)^2}		
		-
		(1 + \zeta_5 - 2\zeta_5^3 )q^2 \tfrac{E(25)^2}{P(1)}
	\nonumber\\
	&=
	(1+\zeta_5)\aqprod{q,\zeta_5,\zeta_5^{-1}}{q}{\infty}
	\left(
		q\tfrac{E(25)}{P(1)^2}							
		-
		\tfrac{q^7}{E(25)P(2)}T(2, 2, 5)
		-
		\tfrac{q^4}{E(25)P(1)}T(2, 1, 5)  
	\right)
,
\end{align}
where the last equality follows from using (\ref{EqProdDissection}) to get that
\begin{align*}
	(1+\zeta_5)\aqprod{q,\zeta_5,\zeta_5^{-1}}{q}{\infty}
	&=
	E(25)(  (2+2\zeta_5+\zeta_5^3)P(2) - (1 + \zeta_5 - 2\zeta_5^3)qP(1)  )	
	.
\end{align*}
We see (\ref{EqProofRU5Last}) now immediately implies (\ref{TheoremMainRU5}).

\end{proof}
%%%%%%%%%%%%%%%%%%%%%%%%%%%%%%%%%%%%%%%%%%%%%%%%%%%%%%%%%%%%%%%%%%%%%%%%%%%%%%%
%%%%%%%%%%%%%%%%%%%%%%%%%%%%%%%%%%%%%%%%%%%%%%%%%%%%%%%%%%%%%%%%%%%%%%%%%%%%%%%
%%%%%%%%%%%%%%%%%%%%%%%%%%%%%%%%%%%%%%%%%%%%%%%%%%%%%%%%%%%%%%%%%%%%%%%%%%%%%%%
%%%%%%%%%%%%%%%%%%%%%%%%%%%%%%%%%%%%%%%%%%%%%%%%%%%%%%%%%%%%%%%%%%%%%%%%%%%%%%%
%%%%%%%%%%%%%%%%%%%%%%%%%%%%%%%%%%%%%%%%%%%%%%%%%%%%%%%%%%%%%%%%%%%%%%%%%%%%%%%
\begin{proof}[Proof of (\ref{TheoremMainRV5})]
By (\ref{EqRVFinalForm}) we have
\begin{align*}
	&RV(\zeta_5,q)
	\\
	&=
		\frac{1}{(1+\zeta_5)\aqprod{q,\zeta_5,\zeta_5^{-1}}{q}{\infty}}
		\sum_{j=-\infty}^\infty
		\frac{ (1-\zeta_5^j)(1-\zeta_5^{j-1})\zeta_5^{1-j} (-1)^{j}q^{\frac{j(j+1)}{2}}	}
			{(1-\zeta_5^2q^{j}) (1-\zeta_5^{-2}q^{j}) }
	\\
	&=	
		\frac{1}{(1+\zeta_5)\aqprod{q,\zeta_5,\zeta_5^{-1}}{q}{\infty}}
		\sum_{k=2}^4		
		(-1)^k(1-\zeta_5^k)(1-\zeta_5^{k-1})\zeta_5^{1-k} q^{\frac{k(k+1)}{2}}	
		\sum_{j=-\infty}^\infty
		\frac{ (-1)^{j}q^{\frac{25j^2+5j}{2} +5jk}	}
			{(1-\zeta_5^2q^{5j+k}) (1-\zeta_5^{-2}q^{5j+k}) }
	\\
	&=	
		\frac{1}{(1+\zeta_5)\aqprod{q,\zeta_5,\zeta_5^{-1}}{q}{\infty}}
		\sum_{k=2}^4		
		(-1)^k(1-\zeta_5^k)(1-\zeta_5^{k-1})\zeta_5^{1-k} q^{\frac{k(k+1)}{2}}	
		\\&\quad\times
		\sum_{j=-\infty}^\infty		
		\frac{ (-1)^j q^{ \frac{25j(j+1)}{2}-10j+5jk} (1-q^{5j+k})(1-\zeta_5 q^{5j+k})(1-\zeta_5^{-1} q^{5j+k}) }
			{ (1 - q^{25j+5k}) }		
	\\
	&=	
		\frac{1}{(1+\zeta_5)\aqprod{q,\zeta_5,\zeta_5^{-1}}{q}{\infty}}
		\sum_{k=2}^4		
		(-1)^k(1-\zeta_5^k)(1-\zeta_5^{k-1})\zeta_5^{1-k} q^{\frac{k(k+1)}{2}}	
		\\&\quad\times	
		\left(
			T(k,k-2,5) 
			- (1+\zeta_5+\zeta_5^4)q^k T(k,k-1,5) 
			+ (1+\zeta_5+\zeta_5^4)q^{2k} T(k,k,5)	
			- q^{3k} T(k,k+1,5)	
		\right)
.
\end{align*}

In (\ref{EqChan1}) we set $\ell=5$, $a=3+k+c$, $b_1=3$, and $b_2=k$ to get
\begin{align*}
	T(k,k+c,5)
	&=
	q^{15-5k}\frac{P(k+c)}{P(3+c)}T(3,3+c,5)	
	-
	q^{15-5k}\frac{P(3+k+c)P(k-3)E(25)^2}{P(3)P(k)P(3+c)}
	,
\end{align*}
for $k=2,4$ and $c=-2,-1,0,1$.
We set $\ell=5$, $a=1$, and $b=3$ in (\ref{EqChan2}) and simplify the products to get
\begin{align*}
	T(3,4,5)
	&=
	q^{-5}\frac{P(1)}{P(2)}T(3,2,5)	
	+
	q^{-10}\frac{P(1)^2E(25)^2}{P(2)^3}
	%maple agrees
.
\end{align*}
With these identities we write each of the $T(a,b,5)$ in terms of $T(3,1,5)$
and $T(3,3,5)$ and carefully simplify to find that
\begin{align}
	\label{EqProofRV5Last}
	&
	(1+\zeta_5)\aqprod{q,\zeta_5,\zeta_5^{-1}}{q}{\infty}RV(\zeta_5,q)
	\nonumber\\
	&=
		T(3, 1, 5)
		\left(
			(-2-2\zeta_5-\zeta_5^3)q^5\tfrac{P(2)}{P(1)}
			+
			(1+\zeta_5-2\zeta_5^3)q^6
		\right)
		+
		T(3, 3, 5)
		\Big(
			(2+2\zeta_5+\zeta_5^3)q^{12}
			\nonumber\\&\quad
			+			
			(-1-\zeta_5+2\zeta_5^3)q^{13}\tfrac{P(1)}{P(2)}
		\Big)
		+
		(2+2\zeta_5+\zeta_5^3)q^2\tfrac{E(25)^2}{P(1)}
		+
		(-3-3\zeta_5+\zeta_5^3)q^3\tfrac{E(25)^2}{P(2)}
		\nonumber\\&\quad		
		+
		(1+\zeta_5-2\zeta_5^3)q^4\tfrac{P(1)E(25)^2}{P(2)^2}
	\nonumber\\
	&=
	(1+\zeta_5)\aqprod{q,\zeta_5,\zeta_5^{-1}}{q}{\infty}
	\left(
		-\tfrac{q^5}{P(1)E(49)}	T(3, 1, 5)
		+
		\tfrac{q^{12}}{P(2)E(49)}
		T(3, 3, 5)
		+
		q^2\tfrac{E(25)}{P(1)P(2)}		
		-
		q^3\tfrac{E(25)}{P(2)^2}
	\right)
,
\end{align}
where the last equality also follows from
\begin{align*}
	(1+\zeta_5)\aqprod{q,\zeta_5,\zeta_5^{-1}}{q}{\infty}
	&=
	E(25)(  (2+2\zeta_5+\zeta_5^3)P(2) - (1 + \zeta_5 - 2\zeta_5^3)qP(1)  )	
	.
\end{align*}
We see (\ref{EqProofRV5Last}) now immediately implies (\ref{TheoremMainRV5}).

\end{proof}
%%%%%%%%%%%%%%%%%%%%%%%%%%%%%%%%%%%%%%%%%%%%%%%%%%%%%%%%%%%%%%%%%%%%%%%%%%%%%%%
%%%%%%%%%%%%%%%%%%%%%%%%%%%%%%%%%%%%%%%%%%%%%%%%%%%%%%%%%%%%%%%%%%%%%%%%%%%%%%%
%%%%%%%%%%%%%%%%%%%%%%%%%%%%%%%%%%%%%%%%%%%%%%%%%%%%%%%%%%%%%%%%%%%%%%%%%%%%%%%
%%%%%%%%%%%%%%%%%%%%%%%%%%%%%%%%%%%%%%%%%%%%%%%%%%%%%%%%%%%%%%%%%%%%%%%%%%%%%%%
%%%%%%%%%%%%%%%%%%%%%%%%%%%%%%%%%%%%%%%%%%%%%%%%%%%%%%%%%%%%%%%%%%%%%%%%%%%%%%%
\begin{proof}[Proof of (\ref{TheoremMainRU7}).]
By (\ref{EqRUFinalForm}) we have that
\begin{align*}
	&RU(\zeta_7,q)
	\\
	&=
		\frac{1}{(1+\zeta_7)\aqprod{q,\zeta_7,\zeta_7^{-1}}{q}{\infty}}
		\sum_{j=-\infty}^\infty
		\frac{ (1-\zeta_7^j)(1-\zeta_7^{j-1})\zeta_7^{1-j} (-1)^{j}q^{\frac{j(j+3)}{2}}	}
			{(1-\zeta_7^2q^{j}) (1-\zeta_7^{-2}q^{j}) }
	\\
	&=	
		\frac{1}{(1+\zeta_7)\aqprod{q,\zeta_7,\zeta_7^{-1}}{q}{\infty}}
		\sum_{k=2}^6		
		(-1)^k(1-\zeta_7^k)(1-\zeta_7^{k-1})\zeta_7^{1-k} q^{\frac{k(k+3)}{2}}	
		\\&\quad\times		
		\sum_{j=-\infty}^\infty
		\frac{ (-1)^{j}q^{\frac{49j^2+21j}{2} +7jk}	}
			{(1-q^{49j+7k})  }
        (1-q^{7j+k})(1-\zeta_7q^{7j+k})(1-\zeta_7^3q^{7j+k})(1-\zeta_7^4q^{7j+k})(1-\zeta_7^6q^{7j+k})		
	\\
	&=	
		\frac{1}{(1+\zeta_7)\aqprod{q,\zeta_7,\zeta_7^{-1}}{q}{\infty}}
		\sum_{k=2}^6		
		(-1)^k(1-\zeta_7^k)(1-\zeta_7^{k-1})\zeta_7^{1-k} q^{\frac{k(k+3)}{2}}	
		\left(
			T(k,k-2,5) 
			+ (\zeta_7^2+\zeta_7^5)q^k T(k,k-1,7) 
			\right.\\&\qquad			
			+ (1+\zeta_7^3+\zeta_7^4)q^{2k} T(k,k,7)	
			- (1+\zeta_7^3+\zeta_7^4)q^{3k} T(k,k+1,7)	
			- (\zeta_7^2+\zeta_7^5)q^{4k} T(k,k+2,7)
			\\&\left.\qquad			
			- q^{5k} T(k,k+3,7)
		\right)
.
\end{align*}

In (\ref{EqChan1}) we set $\ell=7$, $a=3+k+c$, $b_1=3$, and $b_2=k$ to get
\begin{align*}
	T(k,k+c,7)
	&=
	q^{21-7k}\frac{P(k+c)}{P(3+c)}T(3,3+c,7)	
	-
	q^{21-7k}\frac{P(3+k+c)P(k-3)E(49)^2}{P(3)P(k)P(3+c)}
	,
\end{align*}
for $k=2,4,5,6$ and $c=-2,\dots,3$.
We set $\ell=7$, $a=1,2$, and $b=3$ in (\ref{EqChan2}) and simplify the products to get
\begin{align*}
	T(3,4,7)
	&=
		q^{-7}\frac{P(3)}{P(2)}T(3,2,7)	
		-
		q^{-7}\frac{P(1)^2 E(49)^2}{P(3)^2 P(2)}
	,\\
	T(3,5,7)
	&=
		q^{-14}\frac{P(2)}{P(1)}T(3,1,7)	
		-
		q^{-14}\frac{P(2) E(49)^2}{P(3)^2}
.
\end{align*}
With these identities we write each of the $T(a,b,7)$ in terms of $T(3,1,7)$,
$T(3,2,7)$,
and $T(3,3,7)$ and carefully simplify to find that
\begin{align}
	\label{EqProofRU7Last1}
	&(1+\zeta_7)\aqprod{q,\zeta_7,\zeta_7^{-1}}{q}{\infty}RU(\zeta_7,q)	
	\nonumber\\
	&=
		T(3, 1, 7)\Big(
			(-3-3\zeta_7-2\zeta_7^3-4\zeta_7^4-2\zeta_7^5)q^6\tfrac{P(3)}{P(1)}
			+
			(1+\zeta_7+\zeta_7^3+3\zeta_7^4+\zeta_7^5)q^7\tfrac{P(2)}{P(1)}
			\nonumber\\&\quad			
			+
			(4+4\zeta_7+\zeta_7^3+4\zeta_7^4+\zeta_7^5)q^9
		\big)
		+	
		T(3, 2, 7)
		\Big(
			(-1-\zeta_7-2\zeta_7^3-\zeta_7^4-2\zeta_7^5)q^{11}\tfrac{P(3)}{P(2)}
			+
			(-1-\zeta_7+2\zeta_7^4)q^{12}
			\nonumber\\&\quad			
			+
			(1+\zeta_7-\zeta_7^3-\zeta_7^5)q^{14}\tfrac{P(1)}{P(2)}
		\Big)
		+
		T(3, 3, 7)
		\Big(
			(2+2\zeta_7+3\zeta_7^4)q^{15}
			+
			(-2-2\zeta_7-\zeta_7^3-\zeta_7^4-\zeta_7^5)q^{16}\tfrac{P(2)}{P(3)}
			\nonumber\\&\quad			
			+
			(-3-3\zeta_7-2\zeta_7^3-4\zeta_7^4-2\zeta_7^5)q^{18}\tfrac{P(1)}{P(3)}
		\Big)
		+
		(2+2\zeta_7+\zeta_7^3+\zeta_7^4+\zeta_7^5)q\tfrac{P(2)E(49)^2}{P(1)^2}
		\nonumber\\&\quad
		-
		(2+2\zeta_7+3\zeta_7^4)q^2\tfrac{P(3)E(49)^2}{P(2)P(1)}
		+
		(2+2\zeta_7+3\zeta_7^3+4\zeta_7^4+3\zeta_7^5)q^3\tfrac{E(49)^2}{P(1)}
		\nonumber\\&\quad
		-
		(3+3\zeta_7+2\zeta_7^3+4\zeta_7^4+2\zeta_7^5)q^4\tfrac{P(2)E(49)^2}{P(3)P(1)}
		+
		(2+2\zeta_7+3\zeta_7^4)q^5\tfrac{E(49)^2}{P(2)}		
		+
		(2+2\zeta_7+\zeta_7^3+\zeta_7^4+\zeta_7^5)q^6\tfrac{E(49)^2}{P(3)}
		\nonumber\\&\quad
		-
		(2+2\zeta_7+2\zeta_7^3+6\zeta_7^4+2\zeta_7^5)q^7\tfrac{P(2)E(49)^2}{P(3)^2}
		+
		(2+2\zeta_7+3\zeta_7^4)q^{7}\tfrac{P(1)E(49)^2}{P(2)^2}
		\nonumber\\&\quad
		-
		(2+2\zeta_7+\zeta_7^3+\zeta_7^4+\zeta_7^5)q^{8}\tfrac{P(1)E(49)^2}{P(3)P(2)}		
		-
		(4+4\zeta_7 +\zeta_7^3+4\zeta_7^4+\zeta_7^5)q^9\tfrac{P(1)E(49)^2}{P(3)^2}
		\nonumber\\&\quad
		+
		(2 +2\zeta_7 +3\zeta_7^3 + 4\zeta_7^4 +3\zeta_7^5)q^{11}\tfrac{P(1)^2 E(49)^2}{P(3)^2 P(2)}
		+
		(1+\zeta_7+\zeta_7^3+3\zeta_7^4+\zeta_7^5)q^{14}\frac{P(1)^3 E(49)^2}{P(3)^3P(2)}
.
\end{align}
We slightly alter (\ref{EqProofRU7Last1}) before proceeding. By Lemma 4 of 
\cite{AS} with $b=3$, $c=2$, and $d=1$,
we know
\begin{align*}
	P(3)^3P(1) - P(2)^3P(3) + q^7P(1)^3P(2) = 0 
,
\end{align*}
which yields
\begin{align*}
	q\tfrac{P(2)}{P(1)^2}-q^8\tfrac{P(1)}{P(2) P(3)}
	&=			
		q\tfrac{P(3)^2}{P(1) P(2)^2}
	,\\
	q^{11}\tfrac{P(1)^2}{P(2) P(3)^2}		
	&=
		q^4\tfrac{P(2)}{P(1) P(3)} - q^4\tfrac{P(3)}{P(2)^2}		
	,\\
	q^{14}\tfrac{P(1)^3}{P(2) P(3)^3}
	&=
		-q^7\tfrac{P(1)}{P(2)^2} + q^7\tfrac{P(2)}{P(3)^2}			
.
\end{align*}
These identities allow us to rewrite (\ref{EqProofRU7Last1}) as
\begin{align}
	\label{EqProofRU7Last2}
	&(1+\zeta_7)\aqprod{q,\zeta_7,\zeta_7^{-1}}{q}{\infty}RU(\zeta_7,q)	
	=
	\nonumber\\
	&=
		T(3, 1, 7)\Big(
			(-3-3\zeta_7-2\zeta_7^3-4\zeta_7^4-2\zeta_7^5)q^6\tfrac{P(3)}{P(1)}
			+
			(1+\zeta_7+\zeta_7^3+3\zeta_7^4+\zeta_7^5)q^7\tfrac{P(2)}{P(1)}
			\nonumber\\&\quad			
			+
			(4+4\zeta_7+\zeta_7^3+4\zeta_7^4+\zeta_7^5)q^9
		\big)
		+	
		T(3, 2, 7)
		\Big(
			(-1-\zeta_7-2\zeta_7^3-\zeta_7^4-2\zeta_7^5)q^{11}\tfrac{P(3)}{P(2)}
			+
			(-1-\zeta_7+2\zeta_7^4)q^{12}
			\nonumber\\&\quad			
			+
			(1+\zeta_7-\zeta_7^3-\zeta_7^5)q^{14}\tfrac{P(1)}{P(2)}
		\Big)
		+
		T(3, 3, 7)
		\Big(
			(2+2\zeta_7+3\zeta_7^4)q^{15}
			-
			(2+2\zeta_7+\zeta_7^3+\zeta_7^4+\zeta_7^5)q^{16}\tfrac{P(2)}{P(3)}
			\nonumber\\&\quad			
			-
			(3+3\zeta_7+2\zeta_7^3+4\zeta_7^4+2\zeta_7^5)q^{18}\tfrac{P(1)}{P(3)}
		\Big)
		+
		(2+2\zeta_7+\zeta_7^3+\zeta_7^4+\zeta_7^5)q\tfrac{P(3)^2E(49)^2}{P(1) P(2)^2}
		\nonumber\\&\quad
		-
		(2+2\zeta_7+3\zeta_7^4)q^2\tfrac{P(3)E(49)^2}{P(2)P(1)}
		+
		(2+2\zeta_7+3\zeta_7^3+4\zeta_7^4+3\zeta_7^5)q^3\tfrac{E(49)^2}{P(1)}
		+
		(-1-\zeta_7+\zeta_7^3+\zeta_7^5)q^4\tfrac{P(2)E(49)^2}{P(3)P(1)}
		\nonumber\\&\quad
		-
		(2 +2\zeta_7 +3\zeta_7^3 + 4\zeta_7^4 +3\zeta_7^5)q^{4}\tfrac{P(3) E(49)^2}{P(2)^2}
		+
		(2+2\zeta_7+3\zeta_7^4)q^5\tfrac{E(49)^2}{P(2)}		
		+
		(2+2\zeta_7+\zeta_7^3+\zeta_7^4+\zeta_7^5)q^6\tfrac{E(49)^2}{P(3)}
		\nonumber\\&\quad
		+
		(1+\zeta_7-\zeta_7^3-\zeta_7^5))q^{7}\tfrac{P(1)E(49)^2}{P(2)^2}
		-
		(1+\zeta_7+\zeta_7^3+3\zeta_7^4+\zeta_7^5))q^7\tfrac{P(2)E(49)^2}{P(3)^2}
		\nonumber\\&\quad
		-
		(4+4\zeta_7 +\zeta_7^3+4\zeta_7^4+\zeta_7^5)q^9\tfrac{P(1)E(49)^2}{P(3)^2}
	\nonumber\\
	&=
	(1+\zeta_7)\aqprod{q,\zeta_7,\zeta_7^{-1}}{q}{\infty}
	\left(
		-(\zeta_7^2+\zeta_7^5)\tfrac{q^{15}}{P(3)E(49)}T(3,3,7)
		+
		(\zeta_7+\zeta_7^6)\tfrac{q^{11}}{P(2)E(49)}T(3,2,7)
		\right.\nonumber\\&\quad
		-
		(1+\zeta_7^3+\zeta_7^4)\tfrac{q^6 }{P(1)E(49)}T(3,1,7)
  		+
  		q\tfrac{E(49)P(3)}{P(2)^2P(1)}
		-
		(\zeta_7^3+\zeta^4)q^2 \frac{E(49)}{P(1)P(2)}
		+q^3 \tfrac{E(49)}{P(1)P(3)}
		\nonumber\\&\quad\left.
		+
		(\zeta_7+\zeta_7^6)q^4\tfrac{E(49)}{P(2)^2}
		+
		(1+\zeta_7^3+\zeta_7^4)q^6\tfrac{E(49)}{P(3)^2}
	\right)
,
\end{align}
where the last equality follows from using (\ref{EqProdDissection}) to get 
that
\begin{align*}
	(1+\zeta_7)\aqprod{q,\zeta_7,\zeta_7^{-1}}{q}{\infty}
	&=
	E(49)( (2+2\zeta_7+\zeta_7^3+\zeta_7^4+z^5)P(3)
		+(-1-\zeta_7+\zeta_7^3+\zeta_7^5)qP(2)
		\\&\quad
		-(1+\zeta_7+\zeta_7^3+3\zeta_7^4+\zeta_7^5)q^3P(1))
	.
\end{align*}
We see (\ref{EqProofRU7Last2}) immediately implies (\ref{TheoremMainRU7}).

\end{proof}

\section{Proof of Theorem \ref{TheoremCombinatorics}}

Our proof is similar to the rearrangements and interpretations used
by Andrews and Garvan in \cite{AndrewsGarvan} to interpret the vector crank
of \cite{Garvan1} 
as the crank of ordinary partitions.
We need only the $q$-binomial theorem in the form of
\begin{align*}
	\frac{\aqprod{tw}{q}{\infty}}{\aqprod{t,w}{q}{\infty}}
	&=
	\sum_{m=0}^\infty	
	\frac{t^m}{\aqprod{wq^m}{q}{\infty}\aqprod{q}{q}{m}}	
.
\end{align*}
With $t=z^{-1}q^n$ and $w=zq^{n+1}$ this gives us that
\begin{align*}
	RU(z,q)
	&=
		\sum_{n=1}^\infty
		\frac{q^n\aqprod{q^{2n+1}}{q}{\infty}}
			{\aqprod{zq^n,z^{-1}q^n,z^2q^n,z^{-2}q^n}{q}{\infty}}
	\\
	&=	
		\sum_{n=1}^\infty
		\frac{q^n}{(1-zq^n)\aqprod{z^2q^n,z^{-2}q^n}{q}{\infty}}	
		\sum_{m=0}^\infty
		\frac{z^{-m}q^{nm}}{\aqprod{zq^{n+m+1}}{q}{\infty}\aqprod{q}{q}{m}}
	\\
	&=	
		\sum_{n=1}^\infty
		\frac{q^n}{\aqprod{zq^n,z^2q^n,z^{-2}q^n}{q}{\infty}}	
		+	
		\sum_{n=1}^\infty
		\sum_{m=1}^\infty		
		\frac{z^{-m}q^{nm+n}}{(1-zq^n)\aqprod{zq^{n+m+1},z^2q^n,z^{-2}q^n}{q}{\infty}\aqprod{q}{q}{m}}	
	\\
	&=
		\sum_{n=1}^\infty
		\frac{q^n}{\aqprod{zq^n,z^2q^n,z^{-2}q^n}{q}{\infty}}	
		\\&\quad		
		+	
		\sum_{n=1}^\infty
		\sum_{m=1}^\infty		
		\frac{q^{n}}{(1-zq^n)\aqprod{q^{n+1}}{q}{m}\aqprod{zq^{n+m+1},z^2q^n,z^{-2}q^n}{q}{\infty}}		
		\frac{z^{-m}q^{nm}\aqprod{q}{q}{n+m}}{\aqprod{q}{q}{n}\aqprod{q}{q}{m}}
	.
\end{align*}
Loosely speaking, $z$ counts certain parts in $\pi_1$, 
$z^2$ counts $\#(\pi_2)$, $z^{-2}$ counts $\#(\pi_3)$, and
$z^{-1}$ counts $\#(\pi_4)$.
We see the first sum is the generating function for the partition quadruples 
$(\pi_1,\pi_2,\pi_3,\pi_4)$ from $U$ when $\pi_4$ is the empty partition,
where the power of $q$ is the sum of the parts of the $\pi_i$
and the power of $z$ is $\#(\pi_1)-1+2\#(\pi_2)-2\#(\pi_3)$. That is to say the
first sum is the generating function for the number of partition quadruples of
$n$ from $U$ with $u$-rank equal to $m$ when $\pi_4$ is the empty partition.
For the second sum
we first note that
$\frac{\aqprod{q}{q}{n+m}}{\aqprod{q}{q}{n}\aqprod{q}{q}{m}}$ is well
known to be the generating function for partitions with
parts at most $n$ in size and at most $m$ parts in total \cite[Theorem 3.1]{AndrewsBook}, so
that $\frac{q^{nm}\aqprod{q}{q}{n+m}}{\aqprod{q}{q}{n}\aqprod{q}{q}{m}}$
is the generating function for partitions into exactly $m$ parts
and with all parts between $n$ and $2n$. We see then the second sum
is the generating function for the partition quadruples
$(\pi_1,\pi_2,\pi_3,\pi_4)$ from $U$ when $\pi_4$ is non-empty,
where the power of $q$ is the sum of the parts of the $\pi_i$
and the power of $z$ is $\omega(\pi_1,\pi_2,\pi_3,\pi_4)-1+2\#(\pi_2)-2\#(\pi_3)-\#(\pi_4)$. That 
is to say the
second sum is the generating function for the number of partition quadruples of
$n$ from $U$ with $u$-rank equal to $m$ when $\pi_4$ is non-empty.
Thus $ru(m,n)$ is the number of partition quadruples of $n$ from $U$
with $u$-rank equal to $m$.

In the same fashion we find $rv(m,n)$ to be the number of partition quadruples of 
$n$ from $V$ with $v$-rank equal to $m$. The remainder of Theorem 
\ref{TheoremCombinatorics} follows from the fact that
$ru(k,\ell,\ell n+a)=\frac{u(\ell n+a)}{\ell}$ for $(\ell,a)=(3,0)$, $(5,0)$, 
$(5,3)$, $(7,0)$, and $(7,5$)
and 
$rv(k,\ell,\ell n+a)=\frac{v(\ell n+a)}{\ell}$ for $(\ell,a)=(3,1)$, $(5,1)$, 
and $(5,4)$
for all $k$
as established by Theorem \ref{TheoremMain}.

\section{Remarks}
It is somewhat surprising that the dissections of $RU(\zeta_\ell,q)$ and
$RV(\zeta_\ell,q)$ are easier to handle than the modulo $\ell$ dissections
of $U(q)$ and $V(q)$. 
However, the formulas for $U(q)$ and $V(q)$ modulo $\ell$ are still
necessary, as $RU(z,q)$ and $RV(z,q)$ do not also explain the modulo $13$ congruences.
In particular one can check that the coefficient
of $q^{13}$ in $RU(\zeta_{13},q)$ is non-zero. Additionally one can check that 
the coefficient of $q^{13}$ is non-zero in
$F(\zeta_{13}^a,\zeta_{13}^b,\zeta_{13}^c;q)$ 
for all choices of $a$, $b$, and $c$. We leave it as an open problem to find a
statistic to explain the modulo $13$ congruences for $u(n)$ and $v(n)$.

\bibliographystyle{abbrv}
\bibliography{RanksForTwoPartitionQuadruplesFunctionsRef}

\end{document}